\documentclass[11pt,reqno]{amsart}
\usepackage[T1]{fontenc}
\usepackage{lmodern}
\usepackage{amsmath,amssymb,mathtools}
\usepackage[margin=1in]{geometry}
\usepackage{microtype}
\usepackage[colorlinks=true,linkcolor=blue,citecolor=blue,urlcolor=blue]{hyperref}
\numberwithin{equation}{section}
\newtheorem{theorem}{Theorem}[section]
\newtheorem{proposition}[theorem]{Proposition}
\newtheorem{lemma}[theorem]{Lemma}

\theoremstyle{definition}

\newtheorem{example}[theorem]{Example}
\theoremstyle{remark}
\newtheorem{remark}[theorem]{Remark}
\newcommand{\R}{\mathbb R}
\newcommand{\Sph}{S^{n-1}}
\newcommand{\SL}{\mathrm{SL}(n)}

\newcommand{\M}{\mathcal M}
\newcommand{\E}{\mathcal E}
\newcommand{\F}{\mathcal F}
\newcommand{\tr}{\operatorname{tr}}
\newcommand{\supp}{\operatorname{supp}}
\newcommand{\esssup}{\operatorname*{ess\,sup}}
\newcommand{\Star}{\mathcal S_e^n}
\newcommand{\norm}[1]{\left\lVert#1\right\rVert}

\title[Affine fractional Orlicz--Sobolev inequalities]{Affine fractional Orlicz--Sobolev inequalities\\and optimal star bodies}
\date{October 21, 2026}
\author[A.-J. Li]{Ai-Jun Li}
\address[A.-J. Li]{School of Science,
	Zhejiang University of Science and Technology, Hangzhou 310023, China}
\email{liaijun72@163.com}	
\author[Q. Huang]{Qingzhong Huang}
\address[Q. Huang]{College of data science, Jiaxing University, Jiaxing 314001,	China} \email{hqz376560571@163.com} 
\subjclass[2020]{46E35, 52A40, 26D15, 35R11}

\keywords{Affine fractional Orlicz--Sobolev inequality, optimal star body, affine P\'olya--Szeg\H{o} principle, rearrangement}

\thanks{The first author was supported by NSFC (Grant No. 12571146 and 12231006). The second author was supported by  NSFC (Grant No. 11701219) and the Qin Shen Scholar Program of Jiaxing University.}
\begin{document}
\begin{abstract}
In this paper, we establish an affine fractional Orlicz--Sobolev inequality by replacing the Euclidean fractional Orlicz seminorm with an energy obtained by minimizing over origin-symmetric star bodies of prescribed volume. Under finite lower and upper power growth assumptions, a subcritical upper growth bound, and a monotonicity condition on $t^{n/s+2}\Phi''(t)$, the resulting inequality has the same best constant as its Euclidean counterpart. The power case recovers the affine fractional Sobolev inequality of Haddad and Ludwig with the corresponding Luxemburg normalization. To establish this inequality, we investigate the underlying geometric extremal problem. For every nonzero smooth function of compact support, the existence and uniqueness of an optimal star body are proved under the growth assumptions alone, and its radial function is characterized by an equation involving $\Phi(t)+(s/n)t\Phi'(t)$. We also establish an anisotropic rearrangement inequality, together with its equality conditions under additional strictness assumptions. Moreover, the normalized local limit is investigated. The optimized energies converge to the local energy, and the radial functions of the minimizing bodies converge uniformly on the sphere. The limiting modular is determined by $\overline\Phi(t)=\int_0^t\Phi(\tau)\,d\tau/\tau$. Finally, we compare optimization over star bodies with optimization in the affine orbit.
\end{abstract}
\maketitle

\section{Introduction}

The classical Sobolev inequality gives an estimate for a norm of a function in terms of an integral norm of its gradient. Its sharp forms were established by Aubin and Talenti \cite{Aubin,Talenti}, with rearrangement providing a connection between the analytic estimate and the geometry of its extremals. For functions of bounded variation, this relation is expressed through the isoperimetric inequality; see Federer and Fleming \cite{FedererFleming} and Maz'ya \cite{Mazya}. Notice that the Euclidean gradient norm is invariant under rigid motions, but in general it is not invariant under volume-preserving linear transformations. It is therefore natural to consider a smaller functional that is invariant under the unimodular affine group and that can replace the gradient norm in the Sobolev inequality.

A major advance  was achieved by Zhang \cite{Zhang}, who  established the first affine-invariant Sobolev inequalities by  replacing the gradient norm by  a suitable average of the norm of 1-dimensional gradient projections.
Later, Lutwak, Yang, and Zhang \cite{LYZSharp}  obtained its sharp $L^p$ affine Sobolev inequality, which replaces the $L^p$ norm of gradient  by  the $L^p$ affine energy.   Relevant geometric bases are  the $L^p$ affine isoperimetric inequalities  \cite{LYZIso} and the solution to the even $L^p$ Minkowski problem \cite{Lutwak2}. The $L^p$ affine Sobolev inequality provides   a  significantly
stronger  inequality than (and  directly implies) the classical $L^p$ form due to  Aubin\cite{Aubin} and Talenti\cite{Talenti}.  Wang \cite{WangBV} treated the endpoint case on $BV$ (the set of functions of bounded variation). Haddad, Jim\'enez, and Montenegro \cite{HJM} gave a new approach based on the $L^p$ Busemann--Petty centroid inequality. 
Note that the affine P\'olya--Szeg\H{o} principle \cite{LYZSharp}   plays a key role in establishing their $L^p$ affine Sobolev inequality. The equality cases for this principle were subsequently characterized by Wang \cite{WangPS}, and independently, Nguyen\cite{NguyenPS} presented a new proof. Asymmetric forms were established in \cite{HaberlSchuster, HSX}. Moreover, affine Moser--Trudinger and Morrey--Sobolev inequalities \cite{CLYZ}, as well as weighted affine inequalities on the half-space \cite{HJMWeighted}, exhibit further forms of the same geometric-functional connection. For background, see Gardner \cite{Gardner}, Schneider \cite{Schneider}, and the references therein.

For fractional Sobolev seminorms, the distance between two points provides the natural place to introduce anisotropy. Ludwig \cite{LudwigNorm,LudwigPerimeter} introduced anisotropic fractional Sobolev norms and perimeters, and established their local limits. An asymmetric version was considered by Ma \cite{Ma}, and the fractional isoperimetric problem for unconditional unit balls was investigated by Kreuml \cite{Kreuml}. In \cite{HLLp}, Haddad and Ludwig  established affine fractional $L^p$ Sobolev inequalities and an anisotropic fractional rearrangement inequality. They also showed that, among star bodies of prescribed volume, the fractional polar projection body gives the minimum of the corresponding power energy. In addition, fractional affine isoperimetric inequalities were obtained in their  endpoint paper \cite{HLone}, and affine potential inequalities were established in \cite{HLHLS}. Motivated by the
affine inequality and the optimal star-body problem in \cite{HLLp}, we shall investigate their counterparts
for a Young function. The results below recover the power case and its local limit. These special
cases were already established in that paper, and the question considered here is how the minimizing
body is determined when the growth is not homogeneous. 

When a power function is replaced by a Young function, two extremal problems should be distinguished. The first is to minimize an ordinary Orlicz--Sobolev norm over volume-preserving linear transformations. The second is to minimize an anisotropic energy over all star bodies of prescribed volume. In the first problem, the admissible geometric objects are ellipsoids. The authors \cite{HLSobolev,HLOrlicz} considered optimal Sobolev and Orlicz--Sobolev norms in the affine class and characterized their minima. Higher-order affine inequalities and optimal unimodular transformations were investigated by Bullion-Gauthier \cite{Bullion}. The second problem has a larger class of admissible bodies, and its Euler equation is different from the matrix equation for an optimal affine position. For a Young function with nonhomogeneous growth, this difference must be taken into account in defining and characterizing the minimizing body.

The work of Lutwak, Yang, and Zhang\cite{LYZOrlicz,LYZoc} initiated the Orlicz Brunn--Minkowski theory,
which extends the $L_p$ Brunn--Minkowski theory by allowing Young functions in place of powers.   The corresponding dual Orlicz theory was developed in \cite{GHWY}. Further developments from the  functional point of view include the local affine Orlicz P\'olya--Szeg\H{o} principles obtained by Lin \cite{Lin} and Lin and Xi \cite{LinXi}. Fractional Orlicz--Sobolev spaces and their local asymptotics were studied by  Bonder and Salort \cite{BonderSalort}. De N\'apoli,  Bonder, and Salort \cite{DNBS} established a general Euclidean fractional Orlicz P\'olya--Szeg\H{o} principle. Moreover, Alberico, Cianchi, Pick, and Slav\'ikov\'a \cite{ACPSembed} determined optimal Orlicz target spaces for fractional embeddings, and in \cite{ACPSlimit} they obtained local limits without separability assumptions. These results provide the analytic background for the present work. Our problem concerns the additional optimization over star bodies and the behavior of the minimizing bodies under rearrangement and passage to the local limit.

The main purpose of this paper is to establish an affine fractional Orlicz--Sobolev inequality in which the Euclidean fractional seminorm is replaced by an energy obtained by minimizing over origin-symmetric star bodies of prescribed volume. The target is the Sobolev conjugate Orlicz space associated with the Young function, and the best constant agrees with the best Euclidean constant on the same function class. Thus the inequality strengthens its Euclidean counterpart while retaining its normalization. In particular, for a power Young function, the construction recovers the affine fractional Sobolev inequality and the optimal fractional body of Haddad and Ludwig.

To establish the inequality, we first study the star body that enters the energy. The approach we take is to express the modular in terms of its radial density and solve the resulting strictly convex problem under the volume constraint. For $\Phi(t)=t^p+t^q$, both powers are evaluated on one body and at the same Luxemburg scale. Minimizing the two terms separately does not determine their common minimizer. We establish existence and uniqueness and obtain a directional Euler equation that retains the full Young function. Under an explicit sufficient condition on $\Phi$, we then prove an anisotropic rearrangement inequality by rearranging the function and the star body simultaneously. Its equality conditions follow from Burchard's theorem \cite{Burchard}, together with a compatibility argument for the centers of the level sets. The affine Sobolev inequality follows by applying the Euclidean embedding to the rearranged function and comparing the two best constants.

Notice that minimizing a Euclidean seminorm only over volume-preserving linear transformations already gives an affine-invariant quantity, since the target Orlicz norm is unchanged by these transformations. The energy considered here minimizes over the larger class of all admissible star bodies. Its construction therefore requires a geometric extremal argument in addition to the formal affine-orbit minimization. This distinction will also be used when the star-body Euler equation is compared with the necessary matrix condition for an optimal affine position.

Moreover, we investigate the normalized local limit of the energy and its optimal bodies. The main difficulty is to control the minimizing radial functions uniformly as the fractional order tends to one. A limit formula for each fixed body does not provide this control, since the body in the extremal problem varies with the fractional parameter. We obtain positive lower and finite upper bounds from the directional Euler equations before passing to the limit in the optimized modulars. The limiting local problem is governed by an integrated Young function, whose coefficients must be retained when the growth is nonhomogeneous.

A related local extremal problem was considered by Lutwak, Yang, and Zhang \cite{LYZOptimal}, who obtained an optimal convex Sobolev body through a functional form of the $L^p$ Minkowski problem. In their problem, the convex body occurs in an integral involving the gradient. In the fractional problem considered here, the Minkowski functional  of a star body occurs in a double integral involving differences of the function. Thus, even when the fractional order tends to one, the geometric variables and the limiting functionals must be identified explicitly before these two problems can be compared. For general results concerning Sobolev rearrangement and its continuity, see Almgren and Lieb \cite{AlmgrenLieb}.

Throughout this paper, we shall work in $n$-dimensional Euclidean space $\R^n$, where $n\geq2$. Denote by $B^n$ the Euclidean unit ball and by $\omega_n=|B^n|$ its volume. We use $du$ to denote surface measure on $\Sph$, with $\int_{\Sph}du=n\omega_n$. A set $K\subset\mathbb{R}^n$ is called star-shaped with respect to the origin if its intersection with every line through the origin is a line segment. Its radial function $\rho_K: \mathbb{R}^n\setminus\{o\}\to[0,\infty)$  is given by
\begin{equation*}
	\rho_K(x)=\max\big\{\lambda \geq 0: \lambda x\in K\big\},\quad x\in\mathbb{R}^n\setminus\{o\}.
\end{equation*}
If $\rho_K$ restricted 
to $S^{n-1}$ is continuous and strictly positive, then $K$ is called a star body. Let $\mathcal{S}_o^n$ denote the class of all star bodies in $\mathbb{R}^n$. For $K\in\mathcal{S}_o^n$, the Minkowski functional of $K$ is defined by
\begin{equation*}\label{gF}
	\|x\|_K=\min\{\lambda \ge 0:x\in\lambda K\},\quad x\in\mathbb{R}^n.
\end{equation*} Let $\Star$ denote the class of origin-symmetric star bodies in $\mathbb{R}^n$. 

 For a finite Young function $\Phi$, $0<s<1$, and a measurable real-valued function $f$ on $\mathbb{R}^n$, the associated fractional modular is defined by
\begin{equation}\label{eq:intro-modular}
	\M_{s,K}(f;\lambda)=\iint_{\R^n\times\R^n}
	\Phi\left(\frac{|f(x)-f(y)|}{\lambda\norm{x-y}_K^s}\right)
	\frac{dx\,dy}{\norm{x-y}_K^n},\qquad \lambda>0.
\end{equation}
Its Luxemburg seminorm and affine fractional Orlicz energy are defined by
\begin{equation}\label{eq:intro-energy}
	N_{s,K}(f)=\inf\{\lambda>0:\M_{s,K}(f;\lambda)\leq1\},\qquad \E_s(f)=\inf_{\substack{K\in\Star\\|K|=\omega_n}}N_{s,K}(f).
\end{equation}
 We write $[f]_{s,\Phi}=N_{s,B^n}(f)$ for the ordinary Euclidean seminorm. If there is no admissible positive scale in the definition of a Luxemburg seminorm, its value is taken to be $+\infty$.
Let $C_c^k(\R^n)$ denote the set of 
$k$-times continuously differentiable functions on $\mathbb{R}^n$ with  compact supports.

We shall impose the following growth assumption in the main results:
\begin{equation}\label{eq:growth}
\Phi\in C^1([0,\infty))\cap C^2((0,\infty)),\quad
\Phi(0)=\Phi'(0)=0,\qquad
1<\beta_-\leq\frac{t\Phi'(t)}{\Phi(t)}\leq \beta_+<\infty\quad(t>0),
\end{equation}
where $\Phi$ is finite, convex, and strictly increasing. The geometric minimization below uses only this growth condition. For the main Sobolev inequality and the rearrangement principle, we also assume
\begin{equation}\label{eq:kernel-condition}
t\longmapsto\chi_s(t):=t^{n/s+2}\Phi''(t)
\quad\hbox{is nondecreasing on }(0,\infty).
\end{equation}
The kernel condition is a sufficient hypothesis for the simultaneous rearrangement of the function and the body. In particular, it is satisfied by finite positive sums of powers with exponents greater than one. 

For a nonnegative measurable function $f$ whose superlevel sets have finite measure at every positive level, let $f^\star$ denote its symmetric decreasing rearrangement. For a real-valued function, we rearrange its absolute value. Thus $f^\star$ is radially symmetric, radially nonincreasing, and equimeasurable with $|f|$. The precise distribution-function definition will be recalled in Section~\ref{sec:preliminaries}.

For measurable real-valued functions on $\R^n$ that are finite almost everywhere, denote the common class for the Euclidean and affine embedding comparisons by
\[
\mathcal D_{s,\Phi}=\{f:\ |\{|f|>t\}|<\infty\ (t>0),\ [f]_{s,\Phi}<\infty\}.
\]
Functions in this class are identified when they agree almost everywhere. In defining best constants by ratios, we consider only nonzero functions.

In the subcritical range $\beta_+<n/s$, define the Sobolev conjugate Young function by
\begin{equation}\label{eq:sobolev-conjugate}
H(t)=\left(\int_0^t\left(\frac{\tau}{\Phi(\tau)}\right)^{s/(n-s)}d\tau\right)^{(n-s)/n},
\qquad \Phi_{n/s}(t)=\Phi(H^{-1}(t)).
\end{equation}
For the Young function $\Phi_{n/s}$, the Luxemburg convention for the target norm is
\[
\norm f_{L^{\Phi_{n/s}}}=\inf\left\{\lambda>0:\int_{\R^n}\Phi_{n/s}(|f|/\lambda)\,dx\leq1\right\}.
\]
The Euclidean embedding of Alberico, Cianchi, Pick, and Slav\'ikov\'a \cite{ACPSembed} supplies the corresponding finite embedding constant. Our first main result replaces the Euclidean seminorm by the affine energy without increasing its best constant.

\begin{theorem}[Affine fractional Orlicz--Sobolev inequality]\label{thm:affine-sobolev}\label{cor:orlicz-embedding}
Let $0<s<1$. Suppose that \eqref{eq:growth} and \eqref{eq:kernel-condition} hold and that $\beta_+<n/s$. Then we have
\begin{equation}\label{eq:orlicz-chain}
\norm f_{L^{\Phi_{n/s}}}\leq C_{n,s,\Phi}^{\mathrm{Euc}}\E_s(f)
\leq C_{n,s,\Phi}^{\mathrm{Euc}}[f]_{s,\Phi}\qquad(f\in\mathcal D_{s,\Phi}),
\end{equation}
where $C_{n,s,\Phi}^{\mathrm{Euc}}$ denotes the best Euclidean constant for the normalization in \eqref{eq:sobolev-conjugate}. The same constant is best in the first inequality of \eqref{eq:orlicz-chain}.
\end{theorem}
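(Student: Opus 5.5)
The plan is to prove the chain in \eqref{eq:orlicz-chain} from right to left, using the anisotropic rearrangement inequality described in the introduction as the main tool.

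\emph{Second inequality.} This is immediate. Since $B^n\in\Star$ and $|B^n|=\omega_n$, the ball is admissible in the infimum \eqref{eq:intro-energy}. Hence $\E_s(f)\le N_{s,B^n}(f)=[f]_{s,\Phi}$.

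\emph{First inequality.} Fix a nonzero $f\in\mathcal D_{s,\Phi}$. If $\E_s(f)=+\infty$ there is nothing to prove, so assume $\E_s(f)<\infty$, and let $K\in\Star$ with $|K|=\omega_n$ and $\lambda>N_{s,K}(f)$, so that $\M_{s,K}(f;\lambda)\le1$. The key step is the anisotropic P\'olya--Szeg\H{o} type inequality
\[
\M_{s,B^n}(f^\star;\lambda)\le \M_{s,K}(f;\lambda),
\]
in which $f$ and $K$ are rearranged simultaneously ($K^\star=B^n$ because $|K|=\omega_n$). I would prove it by layer-cake decomposition of the kernel. Using $\Phi(0)=\Phi'(0)=0$, write
\[
\Phi\bigl(a\,r^{-s}\bigr)r^{-n}=\int_0^\infty \mathbf 1_{\{r<t\}}\,d\mu_a(t),
\]
with $a=|f(x)-f(y)|/\lambda$, $r=\|x-y\|_K$, and $\mu_a\ge0$. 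Substituting $t\mapsto a^{1/s}$-scaled variables, the positivity of this mixture is exactly where the monotonicity of $\chi_s(t)=t^{n/s+2}\Phi''(t)$ enters. This expresses the modular as a superposition, over $t>0$, of functionals of the form
\[
\iint F\bigl(|f(x)-f(y)|\bigr)\mathbf 1_{tK}(x-y)\,dx\,dy,
\]
with $F$ convex and vanishing at $0$. Each of these decreases under $(f,tK)\mapsto(f^\star,tB^n)$ by the Riesz/Almgren--Lieb rearrangement inequality for convex functions of differences. I would take this rearrangement result as established earlier in the paper, since it is announced as a main ingredient. Consequently $\M_{s,B^n}(f^\star;\lambda)\le1$, so $[f^\star]_{s,\Phi}\le\lambda$. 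Letting $\lambda\downarrow N_{s,K}(f)$ and taking the infimum over $K$ gives $[f^\star]_{s,\Phi}\le\E_s(f)$. In particular $f^\star\in\mathcal D_{s,\Phi}$, since $f^\star$ is equimeasurable with $|f|$ and so its superlevel sets have finite measure.

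Next apply the Euclidean embedding of \cite{ACPSembed} to $f^\star$, with its best constant $C^{\mathrm{Euc}}_{n,s,\Phi}$. Because $f^\star$ and $|f|$ are equimeasurable, $\norm{f}_{L^{\Phi_{n/s}}}=\norm{f^\star}_{L^{\Phi_{n/s}}}$. Therefore
\[
\norm f_{L^{\Phi_{n/s}}}\le C^{\mathrm{Euc}}_{n,s,\Phi}[f^\star]_{s,\Phi}\le C^{\mathrm{Euc}}_{n,s,\Phi}\E_s(f).
\]

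\emph{Sharpness.} Suppose a smaller constant $C$ worked in the first inequality. Combining it with $\E_s\le[\cdot]_{s,\Phi}$ would give $\norm f_{L^{\Phi_{n/s}}}\le C[f]_{s,\Phi}$ on all of $\mathcal D_{s,\Phi}$, contradicting the optimality of $C^{\mathrm{Euc}}_{n,s,\Phi}$. Hence the same constant is best.

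\emph{Main obstacle.} The hard step is the simultaneous rearrangement: justifying that the kernel decomposes into indicators $\mathbf 1_{tK}(x-y)$ with nonnegative weights under \eqref{eq:kernel-condition}, handling the coupling of the scale $\lambda$, the parameter $t$, and the difference variable, and controlling convergence issues at $t\to0,\infty$ via the growth bounds \eqref{eq:growth}.
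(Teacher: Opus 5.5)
Your proof is the paper's. Section~\ref{sec:embedding} (Proposition~\ref{prop:bestconstant} and the proof of Theorem~\ref{thm:affine-sobolev}) likewise gets $[f^\star]_{s,\Phi}\le\E_s(f)$ from Theorem~\ref{thm:rearrangement}, applies the Euclidean embedding to $f^\star$, uses equimeasurability of the target norm, and gets the reverse comparison of constants from $\E_s\le[\cdot]_{s,\Phi}$; the paper first passes to $|f|$ via $\big||f(x)|-|f(y)|\big|\le|f(x)-f(y)|$, because that theorem is stated for $f\ge0$, a step you skip.

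One correction to your sketch of the rearrangement step: as written, $d\mu_a(t)=n t^{-n-1}\Psi_s(a t^{-s})\,dt$, which is nonnegative for every increasing $\Phi$, so \eqref{eq:kernel-condition} is not needed for positivity. It is needed for convexity of $F_t(v)=n t^{-n-1}\Psi_s\big(v/(\lambda t^s)\big)$, which you assert without proof; one checks $\chi_s'=(n/s)\,t^{n/s+1}\Psi_s''$ distributionally, so \eqref{eq:kernel-condition} is exactly convexity of $\Psi_s$, and with that your decomposition is a valid alternative to Lemma~\ref{lem:layers}.
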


Here the best Euclidean constant is taken with exactly the target normalization in \eqref{eq:sobolev-conjugate} and on exactly the class $\mathcal D_{s,\Phi}$. Consequently the assertion identifies the two best constants, but does not give their explicit value for a general nonpower Young function. The proof is given in Section~\ref{sec:embedding}, after the geometric construction and the rearrangement principle have been established.

If $\Phi(t)=t^p$ with $1<p<n/s$, then $\Phi_{n/s}$ is a positive constant multiple of the critical power $t^{np/(n-sp)}$. The energy is expressed by the volume of the fractional polar projection body, as shown in Proposition~\ref{prop:power}. Thus Theorem~\ref{thm:affine-sobolev} recovers the power inequality with the normalization corresponding to our Luxemburg convention.

The geometric ingredient of the main inequality is the following existence and uniqueness result. 

\begin{theorem}[Optimal star bodies]\label{thm:optimal}
Let $0<s<1$. Suppose that  $\Phi$ satisfies \eqref{eq:growth} and  $0\ne f\in C_c^1(\R^n)$. Then, for each $\lambda>0$, there exists a unique body $K_{s,\lambda}(f)\in\Star$ of volume $\omega_n$ that minimizes \eqref{eq:intro-modular}. Moreover, there exists a unique body $K_s(f)$ attaining the infimum in \eqref{eq:intro-energy}. If $\lambda_s=\E_s(f)$ and
\begin{equation}\label{eq:psi}
\Psi_s(t)=\Phi(t)+\frac{s}{n}t\Phi'(t),
\end{equation}
then the radial function of the minimizing body satisfies
\begin{equation}\label{eq:intro-euler}
\int_0^\infty\int_{\R^n}
\Psi_s\left(\frac{\rho_{K_s}(u)^s|f(x+ru)-f(x)|}{\lambda_s r^s}\right)
dx\,\frac{dr}{r}=\eta_s,\qquad u\in\Sph,
\end{equation}
where $\eta_s>0$ is a constant. Together with the volume constraint and $\M_{s,K_s}(f;\lambda_s)=1$, this equation uniquely determines $K_s(f)$ and $\lambda_s$.
\end{theorem}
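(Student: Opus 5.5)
The plan is to pass to polar coordinates centered at $x$ and reduce the minimization over $K$ to a pointwise, strictly convex problem in the radial density $v(u)=\rho_K(u)^n$. Write $y=x+ru$ with $r>0$, $u\in\Sph$. Since $\norm{ru}_K=r/\rho_K(u)$ and $dy=r^{n-1}dr\,du$, we get
\[
\M_{s,K}(f;\lambda)=\int_{\Sph}\rho_K(u)^n\,G_u\!\left(\frac{\rho_K(u)^s}{\lambda}\right)du,
\qquad
G_u(a)=\int_0^\infty\!\!\int_{\R^n}\Phi\!\left(\frac{a|f(x+ru)-f(x)|}{r^s}\right)dx\,\frac{dr}{r}.
\]
Setting $h_u(v)=v\,G_u(v^{s/n}/\lambda)$, the problem becomes: minimize $\int_{\Sph}h_u(v(u))\,du$ over positive, continuous, even $v$ subject to $\int_{\Sph}v\,du=n\omega_n$.

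\textbf{Step 1: finiteness and regularity of $G_u$.} I first show that $G_u(a)$ is finite and depends continuously on $(u,a)$, using only \eqref{eq:growth} and $f\in C_c^1$. The growth condition gives $\Phi(\theta t)\le \max(\theta^{\beta_-},\theta^{\beta_+})\Phi(t)$.
\begin{itemize}
\item For small $r$, use $|f(x+ru)-f(x)|\le r\norm{\nabla f}_\infty$. The integrand is then bounded by $C r^{(1-s)\beta_-}$ on a set of $x$ of bounded measure, which is integrable against $dr/r$.
\item For large $r$, the difference is bounded by $2\norm f_\infty$ and vanishes off a set of measure at most $2|\supp f|$. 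The integrand is then bounded by $C r^{-s\beta_-}$.
\end{itemize}
Dominated convergence then gives continuity in $(u,a)$ and allows differentiation under the integral. Differentiating in $v$ gives
\[
h_u'(v)=\int_0^\infty\!\!\int_{\R^n}\Psi_s\!\left(\frac{v^{s/n}|f(x+ru)-f(x)|}{\lambda r^s}\right)dx\,\frac{dr}{r},
\]
because $\frac{d}{dv}\bigl[v\Phi(cv^{s/n})\bigr]=\Phi+\tfrac{s}{n}t\Phi'(t)$ evaluated at $t=cv^{s/n}$. Since $\Psi_s$ is strictly increasing, $h_u'$ is strictly increasing in $v$, so $h_u$ is strictly convex. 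The bounds $\beta_-\Phi\le t\Phi'\le\beta_+\Phi$ put $\Psi_s$ between two multiples of $\Phi$. Combined with $G_u>0$ for $f\ne0$ (some direction difference is nonzero) and the power scaling of $\Phi$, this yields $h_u'(v)\to0$ as $v\to0$ and $h_u'(v)\to\infty$ as $v\to\infty$, uniformly in $u$ by compactness of $\Sph$.

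\textbf{Step 2: the Lagrange construction for fixed $\lambda$.} For each $\eta>0$, define $v_\eta(u)=(h_u')^{-1}(\eta)$. This is well defined, positive, even (since $h_{-u}=h_u$ by the substitution $x\mapsto x-ru$), and continuous in $u$ by Step 1 and strict monotonicity. The map $\eta\mapsto\int v_\eta$ is continuous and strictly increasing, with limits $0$ and $\infty$, so a unique $\eta$ meets the volume constraint. Convexity gives, for any admissible $w$,
\[
h_u(w)\ge h_u(v_\eta)+\eta(w-v_\eta).
\]
Integrating and using equal volumes shows that $v_\eta$ is a minimizer. Strict convexity gives uniqueness, and this minimizer is $K_{s,\lambda}(f)$.

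\textbf{Step 3: the energy.} Let $m(\lambda)=\min_K\M_{s,K}(f;\lambda)$. For each fixed $K$, the modular is continuous and strictly decreasing in $\lambda$, with $\lambda^{-\beta_\pm}$ scaling bounds. I will show that $m$ inherits these properties:
\begin{itemize}
\item monotonicity by comparing $m$ at two scales on the minimizer of the larger scale;
\item continuity from the uniform scaling bounds;
\item the limits $m(\lambda)\to\infty$ as $\lambda\to0$ and $m(\lambda)\to0$ as $\lambda\to\infty$ from the same bounds.
\end{itemize}
Hence there is a unique $\lambda_s$ with $m(\lambda_s)=1$.

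If some $K$ had $N_{s,K}(f)<\lambda_s$, then $\M_{s,K}(f;\lambda)\le1$ for some $\lambda<\lambda_s$, contradicting $m(\lambda)>1$. So $\E_s(f)=\lambda_s$, and it is attained by $K_s:=K_{s,\lambda_s}$. Conversely, any $K$ with $N_{s,K}(f)=\lambda_s$ satisfies $\M_{s,K}(f;\lambda_s)\le1=m(\lambda_s)$ by continuity, so it is a minimizer at scale $\lambda_s$ and equals $K_s$ by Step 2. The Euler equation \eqref{eq:intro-euler} is exactly $h_u'(v(u))=\eta_s$ at $\lambda=\lambda_s$. For the final determination claim: for a given $\lambda$, the Euler equation plus the volume constraint pins down $v$ through Step 2. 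The condition $\M=1$ then forces $\lambda=\lambda_s$, since $m$ is strictly monotone and the Euler solution at scale $\lambda$ is the minimizer.

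\textbf{Main obstacle.} I expect the hardest part to be the uniform-in-$u$ control in Step 1. This means the continuity of $G_u$ and $h_u'$ in the direction $u$, together with positivity of $G_u$ in every direction. These are what guarantee that $\rho_{K_s}$ is continuous and strictly positive, so that $K_s$ is a genuine star body. I would first try dominated convergence using the explicit majorants above, together with the fact that $f\ne0$ has nonconstant restriction to every line in direction $u$ through its support.
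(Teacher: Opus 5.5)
Your proposal is correct and follows essentially the same route as the paper. It uses the polar representation in the density $\alpha=\rho_K^n$ and strict convexity of $\alpha\mapsto\alpha\,\F_s(f;u,\lambda/\alpha^{s/n})$, whose derivative is given by $\Psi_s$; it then inverts the directional Euler equation with volume normalization and the supporting-line argument at a fixed scale, and finally takes the unique level-one scale of the minimum modular $m_s(\lambda)$.

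One small correction in Step 3. Strict decrease of $m$ comes from evaluating at the minimizer of the \emph{smaller} scale, or more simply from taking the infimum over $K$ in the body-independent scaling bounds, as the paper does. The minimizer of the larger scale only yields the reverse bound $(\lambda_1/\lambda_2)^{\beta_+}m(\lambda_1)\le m(\lambda_2)$.
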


For $t>0$, its derivative is
\[
\Psi_s'(t)=\left(1+\frac sn\right)\Phi'(t)+\frac sn t\Phi''(t)>0,
\]
since the growth condition gives $\Phi'(t)>0$ and convexity gives $\Phi''(t)\geq0$. Thus the function $\Psi_s$ is strictly increasing, whereas convexity of this function is not part of the assumptions.  Notice that the existence and uniqueness assertion in Theorem~\ref{thm:optimal} follows from the growth condition alone. The additional kernel condition used for the main Sobolev inequality is needed in the rearrangement theorem below.

\begin{theorem}[Anisotropic and affine rearrangement]\label{thm:rearrangement}
Let $0<s<1$ and $K\in\Star$. Suppose that \eqref{eq:growth} and \eqref{eq:kernel-condition} hold.
Let $f\geq0$ be measurable and finite almost everywhere, with $|\{f>t\}|<\infty$ for every $t>0$. Denote by $K^\star$ the closed centered Euclidean ball of volume $|K|$. Then, for each $\lambda>0$, we have
\begin{equation}\label{eq:anisotropic-ps}
\M_{s,K}(f;\lambda)\geq\M_{s,K^\star}(f^\star;\lambda).
\end{equation}
As a consequence, the optimized energy satisfies
\begin{equation}\label{eq:affine-ps}
[f^\star]_{s,\Phi}\leq\E_s(f)\leq[f]_{s,\Phi},
\qquad \E_s(f^\star)=[f^\star]_{s,\Phi}.
\end{equation}
Assume additionally that $\Phi''>0$ and that $\chi_s$ is strictly increasing. If the left-hand side of \eqref{eq:anisotropic-ps} is finite and positive and $f\ne0$, then equality holds if and only if there exist $A\in\SL$ and $x_0\in\R^n$ such that
\begin{equation}\label{eq:anisotropic-equality}
K=A K^\star,\qquad
f(x)=f^\star(A^{-1}(x-x_0))\quad\hbox{for almost every }x.
\end{equation}
In particular, for $0\ne f\in C_c^1(\R^n)$ with $f\geq0$, equality holds in the first inequality of \eqref{eq:affine-ps} if and only if $f(x)=f^\star(A^{-1}(x-x_0))$ for almost every $x$, for some $A\in\SL$ and $x_0\in\R^n$.
\end{theorem}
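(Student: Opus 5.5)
The plan is to reduce \eqref{eq:anisotropic-ps} to the Riesz rearrangement inequality for indicator kernels. To do this, I will write the integrand of \eqref{eq:intro-modular} as a superposition of terms of the form (a convex function of $|f(x)-f(y)|$)$\,\times\,$(indicator of a dilate of $K$ evaluated at $x-y$).

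\textbf{Step 1: layer-cake representation.} Fix $\lambda$ and set $w=\lambda^{-1}\norm{x-y}_K^{-s}$ and $d=|f(x)-f(y)|$. The measure $dx\,dy/\norm{x-y}_K^n$ equals $\lambda^{n/s}w^{n/s}\,dx\,dy$. Since $\Phi(0)=\Phi'(0)=0$, Taylor's formula gives $\Phi(dw)=\int_0^\infty\Phi''(\tau)(dw-\tau)_+\,d\tau$. Substituting $\tau=\sigma w$ yields
\[
\Phi(dw)\,w^{n/s}=\int_0^\infty (d-\sigma)_+\,\sigma^{-n/s-2}\,\chi_s(\sigma w)\,d\sigma .
\]
By \eqref{eq:kernel-condition}, the function $x-y\mapsto\chi_s(\sigma\lambda^{-1}\norm{x-y}_K^{-s})$ is a nonincreasing function of $\norm{x-y}_K$. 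Hence it is itself a layer-cake integral $\int_0^\infty \mathbf 1_{rK}(x-y)\,d\mu_\sigma(r)$ for a positive measure $\mu_\sigma$. When $K$ is replaced by $K^\star$, the same $\mu_\sigma$ appears with $rK^\star$, because $|rK|=|rK^\star|$ and $\mu_\sigma$ depends only on $\chi_s$, $\sigma$ and $\lambda$.

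By Tonelli, it then suffices to prove, for each $\sigma>0$ and $r>0$,
\[
\iint (|f(x)-f(y)|-\sigma)_+\mathbf 1_{rK}(x-y)\,dx\,dy\;\geq\;\iint (|f^\star(x)-f^\star(y)|-\sigma)_+\mathbf 1_{rK^\star}(x-y)\,dx\,dy .
\]

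\textbf{Step 2: the per-slice inequality.} Here I follow the Almgren--Lieb device. Take $J(t)=(|t|-\sigma)_+$ and, if necessary, truncate $f$ at levels $0<\varepsilon<M$ so that every term below is finite. For the truncated functions, write $J(b-c)$ as a combination of single-variable terms in $b$ and $c$ minus $\iint \mathbf 1_{\{b>t\}}\mathbf 1_{\{c>\tau\}}\,d\nu(t,\tau)$ with $\nu\geq0$; this comes from the convexity of $J$ (for instance via $J(b-c)=\int$ of $\mathbf 1_{b>t}\mathbf 1_{c\le t-\sigma}$-type expressions).

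\begin{itemize}
\item The single-variable terms integrate against $\mathbf 1_{rK}(x-y)$ to give $|rK|$ times integrals of functions of $f$ alone. These are invariant under $(f,K)\mapsto(f^\star,K^\star)$.
\item The cross terms have the form $\iint\mathbf 1_{\{f>t\}}(x)\mathbf 1_{\{f>\tau\}}(y)\mathbf 1_{rK}(x-y)$. By the Riesz rearrangement inequality, each one increases when $f$ and $K$ are rearranged.
\end{itemize}

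Since the cross terms enter with a minus sign, the slice inequality follows. Monotone convergence then removes the truncations. After that, \eqref{eq:affine-ps} is formal:
\begin{itemize}
\item $\E_s(f)\le[f]_{s,\Phi}$ by taking $K=B^n$;
\item for any admissible $K$ we have $K^\star=B^n$, so $\M_{s,K}(f;\lambda)\le1$ implies $\M_{s,B^n}(f^\star;\lambda)\le1$, which gives $[f^\star]_{s,\Phi}\le\E_s(f)$;
\item applying this to $f^\star$ gives $\E_s(f^\star)=[f^\star]_{s,\Phi}$.
\end{itemize}
For real-valued $f$, the bound $\big||f(x)|-|f(y)|\big|\le|f(x)-f(y)|$ reduces matters to $|f|$.

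\textbf{Step 3: equality cases.} This is the main obstacle. Assume $\Phi''>0$ and $\chi_s$ strictly increasing. Then $\sigma\mapsto\sigma^{-n/s-2}\chi_s(\sigma w)$ is positive, and each $\mu_\sigma$ has full support in $r$. So equality in \eqref{eq:anisotropic-ps}, with a finite positive value, forces equality in the Riesz inequality for the triples $(\mathbf 1_{\{f>t\}},\mathbf 1_{\{f>\tau\}},\mathbf 1_{rK})$ for almost all $t,\tau,r$ in a range where the strictness hypotheses of Burchard's theorem hold. Burchard's theorem then says:
\begin{itemize}
\item $rK$ is a centered ellipsoid, hence $K=AK^\star$ with $A\in\SL$;
\item each level set $\{f>t\}$ is, up to null sets, a homothetic ellipsoid $A(\rho_tB^n)+x_t$.
\end{itemize}
The delicate point is the compatibility of the centers. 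Burchard gives, for a pair $(t,\tau)$, the relation $x_t-x_\tau=$ (center of $rK$) $=0$. I will use this across a dense set of pairs, together with the nesting of level sets, to conclude that $x_t\equiv x_0$. Fubini-type arguments will be needed to pass from almost every $(t,\tau,r)$ to all levels. The result is $f=f^\star(A^{-1}(\cdot-x_0))$.

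The converse follows from affine invariance of the modular under $x\mapsto Ax+x_0$. For $f\in C_c^1$ with $f\ge0$, the final statement follows: if equality holds with $\E_s(f)=[f^\star]_{s,\Phi}$, apply the above to the minimizer $K_s(f)$ from Theorem~\ref{thm:optimal} at $\lambda=\E_s(f)$.
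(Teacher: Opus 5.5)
Your proposal is correct and follows essentially the paper's route. Your Taylor/$\chi_s$ representation, combined with the splitting of $(|b-c|-\sigma)_+$ into level-set terms, is exactly the paper's level-pair representation with kernel $k_{\sigma-\zeta}$ (Lemma~\ref{lem:layers}); from there both arguments use Riesz via the complement identity, then Burchard's theorem, then a Fubini choice of a reference level to align the centers. The only real difference is cosmetic: you layer-cake the kernel into indicators of dilates $rK$ from the start, whereas the paper applies Riesz to the truncated kernels $\min\{k_\alpha,M\}$ and uses the kernel layer-cake only in the equality analysis.

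One step in the final $C_c^1$ statement needs to be made explicit. You must check that $\M_{s,B^n}(f^\star;\lambda)=1$ at $\lambda=[f^\star]_{s,\Phi}$, because the equality characterization is stated for modulars, not Luxemburg values. This follows from finiteness (given by the inequality at that scale), the scaling bounds \eqref{eq:scaling}, and $f^\star\neq0$, which together give continuity and strict decrease of the modular in $\lambda$.
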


Condition \eqref{eq:kernel-condition} is a sufficient assumption for the anisotropic rearrangement argument, and  its necessity is not asserted. Finite positive sums of powers with exponents greater than one satisfy this condition. It is also satisfied by nonpower functions, for example $\Phi(t)=t^2\log(1+t)$. Together with the Euclidean embedding, this theorem yields the main inequality in Theorem~\ref{thm:affine-sobolev}. The complete argument is given in Section~\ref{sec:embedding}. 

To describe the local extremal problem, we introduce the integrated Young function
\begin{equation}\label{eq:barphi}
\overline\Phi(t)=\int_0^t\frac{\Phi(\tau)}{\tau}\,d\tau
\end{equation}
and define the local modular and its associated Luxemburg quantities by
\begin{align}
\M_{0,K}(f;\lambda)&=\int_{\Sph}\rho_K(u)^n
\int_{\R^n}\overline\Phi\left(\frac{\rho_K(u)|\nabla f(x)\cdot u|}{\lambda}\right)dx\,du,\label{eq:local-modular}\\
N_{0,K}(f)&=\inf\{\lambda>0:\M_{0,K}(f;\lambda)\leq1\},\qquad
\E_0(f)=\inf_{|K|=\omega_n}N_{0,K}(f).\label{eq:local-energy}
\end{align}
Here and in all subsequent star-body infimum, the admissible bodies in \eqref{eq:local-energy} belong to $\Star$. Denote by $\widetilde N_{s,K}$ and $\widetilde\E_s$ the quantities obtained from \eqref{eq:intro-energy} when $(1-s)\M_{s,K}$ is used as the modular.

\begin{theorem}[Convergence of the optimizers]\label{thm:limit}
Suppose that $\Phi$ satisfies \eqref{eq:growth} and that $0\ne f\in C_c^2(\R^n)$. Then  the local problem \eqref{eq:local-energy} has a unique minimizing body $K_0(f)$. If $\widetilde K_s(f)$ denotes the unique body attaining $\widetilde\E_s(f)$, then
\begin{equation}\label{eq:optimal-limit}
\lim_{s\rightarrow1^-}\widetilde\E_s(f)=\E_0(f),\qquad
\lim_{s\rightarrow1^-}\norm{\rho_{\widetilde K_s(f)}-\rho_{K_0(f)}}_{C(\Sph)}=0.
\end{equation}
The assertions hold without the additional condition \eqref{eq:kernel-condition}.
\end{theorem}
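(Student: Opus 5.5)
We will prove Theorem~\ref{thm:limit} by reducing every quantity to a problem for the radial density $h=\rho_K^n$ on $\Sph$, exactly as in the proof of Theorem~\ref{thm:optimal}. In polar coordinates $y=x+ru$ one has $\norm{ru}_K=r/\rho_K(u)$. Substituting $r=\rho_K(u)t$ gives
\[
(1-s)\M_{s,K}(f;\lambda)=\int_{\Sph}\rho_K(u)^n\,G_s(u,\rho_K(u);\lambda)\,du .
\]
Here $G_s(u,\rho;\lambda)=(1-s)\int_0^\infty\int_{\R^n}\Phi\bigl(\rho^s|f(x+\rho tu)-f(x)|/(\lambda\rho^s t^s)\bigr)\ldots$. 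After the natural change of variables, this becomes $(1-s)\int_0^\infty\int_{\R^n}\Phi(\rho^s|\delta_{tu}f(x)|/(\lambda t^s))\,dx\,dt/t$.

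The local modular \eqref{eq:local-modular} has the same structure. Its directional part is $g_0(u,\rho;\lambda)=\int_{\R^n}\overline\Phi(\rho|\nabla f\cdot u|/\lambda)\,dx$.

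The first step is a \emph{directional} limit, uniform in the relevant variables. We claim that
\[
(1-s)\int_0^\infty\int_{\R^n}\Phi\Bigl(\frac{\rho^s|f(x+tu)-f(x)|}{\lambda t^s}\Bigr)dx\frac{dt}{t}\longrightarrow\int_{\R^n}\overline\Phi\Bigl(\frac{\rho|\nabla f(x)\cdot u|}{\lambda}\Bigr)dx
\]
as $s\to1^-$. The convergence should be uniform for $u\in\Sph$, $\rho\in[a,b]\subset(0,\infty)$ and $\lambda$ in compact subsets of $(0,\infty)$.

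We split the $t$-integral at $t=1$:
\begin{itemize}
\item On $t\ge1$, use $|\delta f|\le2\|f\|_\infty$ and the upper growth $\Phi(ct^{-s})\le C t^{-s\beta_-}$, which follows from the lower bound in \eqref{eq:growth}. This piece is $O(1-s)$.
\item On $t<1$, use $f\in C_c^2$ to write $\delta_{tu}f(x)=t\nabla f(x)\cdot u+O(t^2)$. The $O(t^2)$ error contributes $O(1-s)$ through the power bounds of $\Phi$.
\item The main term is $(1-s)\int_0^1\Phi(\rho^s a\,t^{1-s}/\lambda)\,dt/t$. Substituting $\tau=a t^{1-s}$ gives exactly $\int_0^{a}\Phi(\rho^s\tau/\lambda)\,d\tau/\tau=\overline\Phi(\rho^s a/\lambda)$.
\end{itemize}
Finally $\rho^s\to\rho$ uniformly on $[a,b]$. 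Continuity of all terms gives the uniformity.

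Existence and uniqueness of $K_0(f)$ follow as in Theorem~\ref{thm:optimal}. The problem is to minimize $\int h\,g_0(u,h^{1/n};\lambda)$ under $\int h=n\omega_n$. The map $h\mapsto h\,\overline\Phi(h^{1/n}c)$ is strictly convex, since $\overline\Phi$ again satisfies \eqref{eq:growth} with the same $\beta_\pm$. The map $\lambda\mapsto$ modular is strictly decreasing, which yields a unique minimizer of the Luxemburg infimum and an Euler equation $\int_{\R^n}\overline\Psi_0(\rho_{K_0}(u)|\nabla f\cdot u|/\lambda_0)\,dx=\eta_0$, where $\overline\Psi_0=\overline\Phi+\frac1n t\overline\Phi'$. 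If the paper's proof of Theorem~\ref{thm:optimal} only uses the growth bounds and strict monotonicity in $\rho$ of the directional quantity, we will cite it verbatim for the local problem.

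The main obstacle is the \emph{uniform two-sided bounds} $0<a\le\rho_{\widetilde K_s(f)}\le b$ as $s\to1^-$, together with bounds $0<\lambda_-\le\widetilde\E_s(f)\le\lambda_+$.

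For the energy bounds, the upper bound comes from testing $K=B^n$ and using the directional limit. The lower bound comes from the volume constraint, which forces $\rho\ge1$ on a set of directions of positive measure, together with the growth bounds.

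For the radial bounds we use the Euler equation \eqref{eq:intro-euler} for the $(1-s)$-normalized problem. Since $\Psi_s$ is comparable to $\Phi$ with constants depending only on $\beta_\pm$, the left side of the Euler equation at direction $u$ is comparable to $\rho(u)^{s\beta_\pm}$ times $(1-s)\int\int\Phi(|\delta_{ru}f|/(\lambda r^s))$. By the directional limit, the latter is comparable to $\int|\nabla f\cdot u|^{\beta_\pm}$-type quantities. These are bounded above, and bounded below uniformly in $u$ because $\nabla f\not\equiv0$ and $u\mapsto\int\overline\Phi(|\nabla f\cdot u|)$ is continuous and positive. The constant $\eta_s$ is pinned by integrating the Euler equation against $\rho^n$ and using $\M=1$ and $|K|=\omega_n$. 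Together these give $\rho(u)\in[a,b]$.

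With the bounds in hand, we pass to the limit. By Blaschke-type compactness of radial functions with uniform bounds, we first obtain weak-$*$ compactness of $h_s=\rho_s^n$ in $L^\infty$. Then $\liminf\widetilde\E_s\ge\E_0$ follows by lower semicontinuity of the convex functional, combined with uniform convergence of the integrands. The reverse inequality $\limsup\widetilde\E_s\le\E_0$ follows by testing with $K_0$.

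Uniqueness identifies every limit as $K_0$. Uniform convergence of $\rho_s$ is then upgraded from the Euler equations. The directional functions converge uniformly in $(u,\rho)$ and are strictly increasing in $\rho$ with a uniform modulus on $[a,b]$, and $\eta_s\to\eta_0$, $\lambda_s\to\lambda_0$. Hence the solution $\rho_s(u)$ of the Euler equation converges uniformly in $u$ to $\rho_{K_0}(u)$.
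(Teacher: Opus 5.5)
Your argument is correct in outline, but it is organized differently from the paper's. The paper first works at \emph{fixed} scales $\lambda$ in a compact interval $I$. Since $\alpha_{s,\lambda}$ has mean one, it takes a value $\le1$ and a value $\ge1$, which squeezes $\eta_{s,\lambda}$ between the extreme values of $\widetilde G_{s,\lambda}(u,1)$. This yields uniform density bounds, then $\sup_{\lambda\in I}\norm{\alpha_{s,\lambda}-\alpha_{0,\lambda}}_{C(\Sph)}\to0$, then $\widetilde m_s\to m_0$ uniformly on $I$, and finally $\widetilde\lambda_s\to\lambda_0$ via the scaling estimates and uniqueness of the local root.

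You work directly at $\widetilde\lambda_s$. Your way of pinning the multiplier is a neat observation not in the paper: integrate the Euler equation against $\rho^n$ and use \eqref{eq:psi-bounds} together with $(1-s)\M_{s,\widetilde K_s}(f;\widetilde\lambda_s)=1$. This gives $n\omega_n\eta_s\in[1+s\beta_-/n,\,1+s\beta_+/n]$ with no information on the scale. Your $\Gamma$-convergence step is valid once you note that $\alpha_{0,\lambda}$ also minimizes among measurable even densities, which the supporting-line argument gives. (Its ingredients are weak-$*$ compactness, which is Banach--Alaoglu rather than Blaschke, weak lower semicontinuity of the convex integral functional, and $K_0$ as recovery body.)

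However, that step is redundant once you have the Euler upgrade. For subsequential limits $\widetilde\lambda_s\to\lambda^*$, $\eta_s\to\eta^*$, the inverse-equation argument gives $\rho_s^n\to\alpha_{0,\lambda^*}$ uniformly. Passing to the limit in the normalization then gives $m_0(\lambda^*)=1$, so $\lambda^*=\lambda_0$ and $\eta^*=\eta_0$. This is also how the convergence $\eta_s\to\eta_0$, which you only assert, should be justified. The paper's route buys the stronger statement that fixed-scale minimizers converge uniformly in $\lambda$; yours is shorter.

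Two steps need repair.

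\textbf{Lower bound for $\widetilde\E_s(f)$.} It does not follow from $\rho\ge1$ on a set of positive measure, since that measure is not bounded below uniformly in $s$. Use instead a single direction $u_1$ with $\rho_s(u_1)\ge1$ in the Euler equation. When $\widetilde\lambda_s\le1$ this gives $\eta_s\ge(1+s\beta_-/n)(1-s)\F_s(f;u_1,\widetilde\lambda_s)\ge(1+s\beta_-/n)\,\widetilde\lambda_s^{-\beta_-}(1-s)\F_s(f;u_1,1)$. Combined with your upper bound for $\eta_s$ and the uniform positivity of $(1-s)\F_s(f;u,1)$ near $s=1$, this bounds $\widetilde\lambda_s$ from below. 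This is exactly the paper's ``one value $\le1$, one value $\ge1$'' device.

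\textbf{Limit of the Euler expressions.} The final upgrade needs the uniform limit of $(1-s)\F_{\Psi_s,s}$, not only of $(1-s)\F_s$. Your Taylor/mean-value estimate is fine for $\Phi$, even with rate $O(1-s)$. It does not transfer to $\Psi_s=\Phi+\frac sn t\Phi'$, because the derivative of $\Psi_s$ contains $t\Phi''(t)$, which \eqref{eq:growth} does not control near $0$. You can fix this in either of two ways. One is the paper's monotone sandwich on $(0,\delta)$: replace $a$ by $(a-C_f\delta)_+$ and by $a+C_f\delta$, with $C_f=\frac12\norm{D^2f}_\infty$, and use the exact identity whose right side involves $\overline\Phi+\frac sn\Phi$. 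The other is to deduce it from your $\Phi$-limit. The map $\alpha\mapsto(1-s)H_{u,\lambda}(\alpha)$ is convex and converges uniformly on compacts to the $C^1$ convex function $H_{0,u,\lambda}$. Hence its derivatives converge uniformly in $(u,\alpha,\lambda)$ on compact sets as well.
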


Notice that the normalization in Theorem~\ref{thm:limit} is a normalization of the modular. This is necessary when the Young function has nonhomogeneous growth, since the Luxemburg construction need not turn multiplication of a modular into multiplication of its seminorm by one prescribed scalar. When the Young function is a single power, multiplication of the modular by $1-s$ is equivalent to multiplying its Luxemburg seminorm by the corresponding power of that factor. For a sum of distinct powers, however, the normalized modular does not give one common scalar rescaling of the original seminorm. This distinction is reflected in the coefficients of the integrated function appearing in the local problem.

The paper is organized as follows. In Section~\ref{sec:preliminaries}, we recall symmetric decreasing rearrangement, derive the directional representation of the modular, and establish its normalization and affine covariance. Section~\ref{sec:optimization} gives the existence and uniqueness of the optimal star bodies and recovers the power case. In Section~\ref{sec:rearrangement}, we prove the anisotropic rearrangement inequality and characterize its equality conditions. The affine fractional Orlicz--Sobolev inequality in Theorem~\ref{thm:affine-sobolev} is proved in Section~\ref{sec:embedding}, together with the general transfer of Euclidean embeddings and nonpower examples. Section~\ref{sec:limit} establishes the uniform local limit and convergence of the minimizing bodies. Section~\ref{sec:ellipsoids} compares the star-body problem with affine-orbit minimization. 

\section{Preliminaries and directional representation}\label{sec:preliminaries}

Let $E \subseteq \mathbb{R}^n$ be a measurable set of finite measure.
The Schwarz symmetral of $E$, denoted by $E^\star$, is the closed
centered Euclidean ball with the same volume as $E$.
Let $f \colon \mathbb{R}^n \to [0,\infty)$ be  measurable and finite almost everywhere, with $|\{f>t\}|<\infty$ for every $t>0$.
The Schwarz symmetral of $f$,
denoted by $f^\star$, is defined by
\[
f^\star(x)=\int_0^\infty \textbf{1}_{\{f \geq t\}^\star}(x)\,dt
\]
for $x \in \mathbb{R}^n$. Hence, $f^\star$ is determined by the
properties of being radially symmetric, decreasing, and having
super-level sets of the same measure as those of $f$. Note that
$f^\star$ is also called the symmetric decreasing rearrangement
of $f$. For the general real-valued class $\mathcal D_{s,\Phi}$, the notation $f^\star$ means the Schwarz symmetral of $|f|$. Since $\{f\geq t\}\subset\{f>t/2\}$ for $t>0$, every level set in the defining integral has finite measure.  The sets $\{f=t\}$ have positive measure for at most countably many positive level. Hence using $\{f\geq t\}$ instead of $\{f>t\}$ in the layer integral does not change it. The defining layer formula gives
\begin{equation*}\label{eq:user3-equimeasurability}
\{f^\star>t\}=\{f>t\}^\star\quad\hbox{up to a null set},
\qquad |\{f^\star>t\}|=|\{f>t\}|\quad(t>0).
\end{equation*}
In particular, rearrangement preserves the target Luxemburg norm used in Theorem~\ref{thm:affine-sobolev}.

For $t>0$ and $a\geq1$,  the ordinary logarithmic derivative gives
\[
\beta_-\log a\leq\log\Phi(at)-\log\Phi(t)
=\int_t^{at}\frac{\tau\Phi'(\tau)}{\Phi(\tau)}\frac{d\tau}{\tau}
\leq \beta_+\log a.
\]
Exponentiation proves the large-factor estimate below. Applying it with $t/a$ in place of $t$ and then rearranging proves the small-factor estimate. At $t=0$ every term is zero, so both estimates extend to that endpoint. We obtain
\begin{equation}\label{eq:scaling}
a^{\beta_-}\Phi(t)\leq\Phi(at)\leq a^{\beta_+}\Phi(t),
\qquad a^{-\beta_+}\Phi(t)\leq\Phi(t/a)\leq a^{-\beta_-}\Phi(t).
\end{equation}
Moreover, it follows directly from \eqref{eq:growth} and \eqref{eq:psi} that
\begin{equation}\label{eq:psi-bounds}
\left(1+\frac{s \beta_-}{n}\right)\Phi(t)
\leq\Psi_s(t)\leq
\left(1+\frac{s \beta_+}{n}\right)\Phi(t).
\end{equation}

For a nonnegative function $Q$ on $[0,\infty)$ satisfying $Q(0)=0$, we define its directional modular by
\begin{equation}\label{eq:directional}
\F_{Q,s}(f;u,\lambda)=\int_0^\infty\int_{\R^n}
Q\left(\frac{|f(x+ru)-f(x)|}{\lambda r^s}\right)dx\,\frac{dr}{r}, \qquad u\in S^{n-1}.
\end{equation}
When $Q=\Phi$, we use the abbreviated notation $\F_s$. Making a change of variables in the radial parameter, we obtain, for $c>0$,
\begin{equation}\label{eq:direction-scaling}
\F_{Q,s}(f;cu,\lambda)=\F_{Q,s}(f;u,\lambda/c^s).
\end{equation}
Here the same integral formula is used to define the directional modular for nonzero vectors as well as unit vectors.  For evenness, the substitution $z=x-ru$ gives
\[
\int Q\left(\frac{|f(x-ru)-f(x)|}{\lambda r^s}\right)dx
=\int Q\left(\frac{|f(z+ru)-f(z)|}{\lambda r^s}\right)dz.
\]
Integrating over $r$ proves $\F_{Q,s}(f;-u,\lambda)=\F_{Q,s}(f;u,\lambda)$. Since the integrands are nonnegative, both changes of variables remain valid as identities of extended integrals. Finiteness will follow from the growth condition and the regularity of the function in the next lemma.

\begin{lemma}\label{lem:directional}
Suppose that \eqref{eq:growth} holds and that $0\ne f\in C_c^1(\R^n)$. For each fixed $s\in(0,1)$, the function $(u,\lambda)\mapsto\F_s(f;u,\lambda)$ is finite, positive, and continuous on $\Sph\times(0,\infty)$. The same conclusions hold for $\F_{\Psi_s,s}$. Moreover, the modular has the polar representation
\begin{equation}\label{eq:polar}
\M_{s,K}(f;\lambda)=\int_{\Sph}\rho_K(u)^n
\F_s(f;u,\lambda/\rho_K(u)^s)\,du.
\end{equation}
For each $K\in\Star$, there is a unique positive scale $\lambda$ satisfying $\M_{s,K}(f;\lambda)=1$, and this scale is equal to $N_{s,K}(f)$.
\end{lemma}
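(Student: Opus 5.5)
To prove finiteness, I split the radial integral in \eqref{eq:directional} at $r=1$, with $f\in C_c^1$ supported in a ball $B_R$ and $L=\norm{\nabla f}_\infty$.

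For $r\le1$, I use $|f(x+ru)-f(x)|\le Lr$. The integrand in $x$ is supported in $B_{R+1}$, and by \eqref{eq:scaling},
\[
\Phi\Big(\frac{Lr^{1-s}}{\lambda}\Big)\le r^{(1-s)\beta_-}\Phi(L/\lambda) \qquad (r\le 1).
\]
So this part is bounded by $C\int_0^1 r^{(1-s)\beta_--1}dr<\infty$.

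For $r\ge1$, I use $|f(x+ru)-f(x)|\le 2\norm f_\infty$. The $x$-support has measure at most $2|B_R|$, and again by \eqref{eq:scaling},
\[
\Phi\Big(\frac{2\norm f_\infty}{\lambda r^s}\Big)\le r^{-s\beta_-}\Phi(2\norm f_\infty/\lambda).
\]
This gives a convergent tail $\int_1^\infty r^{-s\beta_--1}dr$. These bounds are uniform for $\lambda$ in compact subsets of $(0,\infty)$ and for all $u$. By \eqref{eq:psi-bounds}, the same bounds hold for $\Psi_s$.

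Positivity follows from continuity of $f$. Since $f\ne0$, for each $u$ and each small $r>0$ there is an open set of $x$ with $f(x+ru)\ne f(x)$: take a point where $f\neq 0$ and a point $x$ near the boundary of the support along direction $u$. Since $\Phi>0$ on $(0,\infty)$ (strictly increasing with $\Phi(0)=0$), the integral is positive.

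Continuity in $(u,\lambda)$ follows from dominated convergence. The integrand is continuous in $(u,\lambda)$ for fixed $(x,r)$ because $f$ and $\Phi$ are continuous. The dominating function is the one built above with $\lambda$ replaced by $\lambda_0/2$ near $\lambda_0$: it is integrable on $\R^n\times(0,\infty)$ with respect to $dx\,dr/r$ and independent of $u$. I expect this uniform domination to be the only real point to check, and the splitting at $r=1$ handles it.

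For the polar representation \eqref{eq:polar}, I substitute $y=x+z$ in \eqref{eq:intro-modular} and write $z=ru$ in polar coordinates, so $dz=r^{n-1}dr\,du$. Homogeneity gives $\norm{ru}_K=r/\rho_K(u)$. The integrand becomes
\[
\Phi\Big(\frac{\rho_K(u)^s|f(x+ru)-f(x)|}{\lambda r^s}\Big)\rho_K(u)^n r^{-n}\,r^{n-1}dr\,du\,dx.
\]
Tonelli justifies the rearrangement since everything is nonnegative. The inner integral is exactly $\F_s(f;u,\lambda/\rho_K(u)^s)$.

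For the scale, set $m(\lambda)=\M_{s,K}(f;\lambda)$. Since $\rho_K$ is bounded between two positive constants on $\Sph$, the previous bounds show $m$ is finite and continuous, by dominated convergence in $u$ as well. Monotonicity of $\Phi$ makes $m$ nonincreasing. By \eqref{eq:scaling}, for $a>1$,
\[
a^{-\beta_+}m(\lambda)\le m(a\lambda)\le a^{-\beta_-}m(\lambda).
\]
Hence $m$ is strictly decreasing, because $m(\lambda)>0$. It also tends to $\infty$ as $\lambda\to0$ and to $0$ as $\lambda\to\infty$. The intermediate value theorem then gives a unique $\lambda$ with $m(\lambda)=1$. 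Since $\{\lambda: m(\lambda)\le1\}=[\lambda,\infty)$, this scale equals $N_{s,K}(f)$.
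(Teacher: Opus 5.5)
Your proposal follows the paper's proof closely. It uses the same splitting of the radial integral at $r=1$, with the bounds $|f(x+ru)-f(x)|\le Lr$ and $\le 2\norm{f}_\infty$ fed into the small-factor estimate in \eqref{eq:scaling}. It uses the same polar substitution justified by Tonelli, and the same two-sided scale comparison plus intermediate value argument for the Luxemburg threshold. Your positivity argument is a direct version of the paper's contradiction via $f(x+kr_0u)=f(x)$: for every $r>0$ you exhibit a point on a line through a point where $f\ne0$ with $f(x)\neq0=f(x+ru)$, and then use continuity. Both versions are fine.

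The one step that does not work as written is in the continuity argument. You claim a single dominating function on $\R^n\times(0,\infty)$, integrable for $dx\,dr/r$ and independent of $u$, and this function does not exist as built. For $r\ge1$ your bound is $r^{-s\beta_-}\Phi(2\norm{f}_\infty/\lambda)$ times the indicator of $B_R\cup(B_R-ru)$, and that set moves with $u$. If you take the supremum over $u$ in a neighbourhood of $u_0$, the union of the sets $B_R-ru$ has measure growing like $r^{n-1}$. The resulting bound has $x$-integral of order $r^{n-1-s\beta_-}$, which is not integrable against $dr/r$ as soon as $s\beta_-\le n-1$. So the splitting at $r=1$ does not by itself settle the uniform domination you identified as the key point.

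The repair is to apply dominated convergence in two stages, as the paper does.
\begin{enumerate}
\item For fixed $r$, all the sets $B_R\cup(B_R-ru)$ lie in $B_{R+r}$. Dominated convergence in $x$ then gives continuity of the inner $x$-integral in $(u,\lambda)$.
\item Your $x$-integrated bounds, $C\,r^{(1-s)\beta_-}$ for $r\le1$ and $C\,r^{-s\beta_-}$ for $r\ge1$ (with $\lambda$ replaced by $\lambda_0/2$), depend only on $r$ and are integrable against $dr/r$. Dominated convergence in $r$ finishes the argument.
\end{enumerate}
The paper phrases the same idea as dominated convergence on a bounded radial interval, where all translated supports lie in one compact set, together with tails that are uniformly small in the parameters.
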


\begin{proof}
 Put
$
L=\norm{\nabla f}_\infty$ and $ D=2\norm f_\infty.
$
Since $f$ is nonzero and compactly supported, $0<|\supp f|<\infty$. For every $r>0$ and $u\in\Sph$, the integrand in the directional modular vanishes outside $\supp f\cup(\supp f-ru)$, a set of measure at most $2|\supp f|$. The fundamental theorem of calculus and boundedness of $f$ give, respectively,
\[
|f(x+ru)-f(x)|\leq Lr,\qquad
|f(x+ru)-f(x)|\leq D.
\]
Fix an interval $I=[\lambda_-,\lambda_+]\subset(0,\infty)$. For $0<r\leq1$, the small-factor part of \eqref{eq:scaling} gives
\[
\int_{\R^n}\Phi\left(\frac{|f(x+ru)-f(x)|}{\lambda r^s}\right)dx
\leq 2|\supp f|\Phi(L/\lambda_-)r^{(1-s)\beta_-}.
\]
For $r\geq1$, it gives
\[
\int_{\R^n}\Phi\left(\frac{|f(x+ru)-f(x)|}{\lambda r^s}\right)dx
\leq 2|\supp f|\Phi(D/\lambda_-)r^{-s\beta_-}.
\]
Consequently the integral over $0<r<\delta\leq1$ is at most
$2|\supp f|\Phi(L/\lambda_-)\delta^{(1-s)\beta_-}/((1-s)\beta_-)$, and the integral over $r>R\geq1$ is at most
$2|\supp f|\Phi(D/\lambda_-)R^{-s\beta_-}/(s\beta_-)$. Both bounds hold uniformly for $(u,\lambda)\in\Sph\times I$. Hence the directional integral is finite, and its small-scale and large-scale parts can be made uniformly small by choosing the corresponding radial cutoffs. By \eqref{eq:psi-bounds}, the same estimates, multiplied by $1+s\beta_+/n$, apply to $\Psi_s$.

For a bounded radial interval, all translated supports lie in one compact set. Applying dominated convergence first to the integral in $x$ and then to the radial integral gives continuity on that interval. The two bounds above control its complementary tails uniformly in the parameters, and hence give continuity of the full directional integral. By \eqref{eq:psi-bounds}, the same argument applies to $\Psi_s$.

For positivity, fix $u\in\Sph$ and suppose that $\F_s(f;u,\lambda)=0$. Since $\Phi(t)>0$ for $t>0$, Tonelli's theorem implies $f(x+ru)=f(x)$ for almost every $x$ and almost every $r>0$. Choose one such $r=r_0>0$. The function $x\mapsto f(x+r_0u)-f(x)$ is continuous and vanishes almost everywhere, so it vanishes everywhere. Iteration gives $f(x+kr_0u)=f(x)$ for every positive integer $k$. For each fixed $x$, the point $x+kr_0u$ eventually leaves $\supp f$, and hence $f(x)=0$. This contradicts the hypothesis. 

We now derive the polar formula. For a star body, $\norm{ru}_K=r/\rho_K(u)$. Substituting $y=x+ru$ gives
\begin{align*}
\M_{s,K}(f;\lambda)
&=\int_{\Sph}\int_0^\infty\int_{\R^n}
\Phi\left(\frac{\rho_K(u)^s|f(x+ru)-f(x)|}{\lambda r^s}\right)
\rho_K(u)^n\,dx\,\frac{dr}{r}\,du\\
&=\int_{\Sph}\rho_K(u)^n
\F_s(f;u,\lambda/\rho_K(u)^s)\,du.
\end{align*}
Every integrand is nonnegative, so Tonelli's theorem justifies these operations before finiteness has been established. Positivity and continuity of $\rho_K$ on the compact sphere provide numbers $0<r_K\leq R_K<\infty$ with $r_K\leq\rho_K\leq R_K$. The effective scales $\lambda/\rho_K^s$ therefore belong to a compact positive interval whenever $\lambda\in I$. The already proved directional bounds and dominated convergence on the sphere show that $\M_{s,K}$ is finite, positive, and continuous in $\lambda$.

We finally identify the Luxemburg value with the unique level-one scale. The growth condition provides the following quantitative comparison between two scales. For $\lambda_2>\lambda_1>0$, \eqref{eq:scaling} gives
\[
\left(\frac{\lambda_1}{\lambda_2}\right)^{\beta_+}\M_{s,K}(f;\lambda_1)
\leq\M_{s,K}(f;\lambda_2)
\leq\left(\frac{\lambda_1}{\lambda_2}\right)^{\beta_-}\M_{s,K}(f;\lambda_1).
\]
The upper bound is strictly smaller than $\M_{s,K}(f;\lambda_1)$, since that number is positive. Applying these bounds with one scale fixed shows that the limits are zero as $\lambda\to\infty$ and infinity as $\lambda\rightarrow0$. The intermediate value theorem now supplies a unique $\lambda_K>0$ with $\M_{s,K}(f;\lambda_K)=1$. Strict monotonicity shows that the admissible scale set in the Luxemburg definition is exactly $[\lambda_K,\infty)$. Its infimum is $\lambda_K$.
\end{proof}

Under finite
upper power growth, a modular that is finite at one positive scale is finite at every positive scale. If
its Luxemburg value is finite and nonzero, the same scaling estimates, together with dominated
convergence, show that the modular is equal to one at that value. This observation will be applied
in Section 5.

\begin{proposition}\label{prop:invariance}
Let $A\in\SL$ and put $f_A(x)=f(A^{-1}x)$. Then we have
\begin{equation}\label{eq:covariance}
\M_{s,AK}(f_A;\lambda)=\M_{s,K}(f;\lambda),\qquad
\E_s(f_A)=\E_s(f).
\end{equation}
Moreover, the energy is translation invariant and satisfies $\E_s(cf)=|c|\E_s(f)$ for every nonzero real $c$. At the zero function, $\E_s(0)=0$. If $\E_s(f)<\infty$, the homogeneity identity also holds for $c=0$. For the smooth functions in Theorem~\ref{thm:optimal}, the minimizing bodies satisfy
\[
K_s(f_A)=A K_s(f),\qquad K_s(cf)=K_s(f)\quad(c\ne0).
\]
\end{proposition}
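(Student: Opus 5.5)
The plan is to verify everything directly from the defining double integral \eqref{eq:intro-modular} by linear changes of variables, and to read off the remaining statements from the Luxemburg definition.

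\emph{Covariance.} For $A\in\SL$ the key identity is $\norm{Az}_{AK}=\norm z_K$, which holds because $Az\in\lambda AK$ if and only if $z\in\lambda K$. Substituting $x=Ax'$ and $y=Ay'$ in $\M_{s,AK}(f_A;\lambda)$ uses $|\det A|=1$ and $f_A(Ax')=f(x')$. The integrand then becomes the integrand of $\M_{s,K}(f;\lambda)$. All integrands are nonnegative, so Tonelli's theorem makes this valid even for infinite values. Hence $N_{s,AK}(f_A)=N_{s,K}(f)$. Since $K\mapsto AK$ is a bijection of $\{K\in\Star:|K|=\omega_n\}$ onto itself, with inverse $K\mapsto A^{-1}K$, taking infima gives $\E_s(f_A)=\E_s(f)$.

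\emph{Translation and homogeneity.} Translation invariance follows in the same way from the substitution $(x,y)\mapsto(x+x_0,y+x_0)$. For $c\neq0$, we have $\M_{s,K}(cf;\lambda)=\M_{s,K}(f;\lambda/|c|)$. Therefore the admissible scale set for $cf$ is $|c|$ times the admissible set for $f$, so $N_{s,K}(cf)=|c|N_{s,K}(f)$, and taking the infimum over $K$ gives $\E_s(cf)=|c|\E_s(f)$. This also covers the conventional value $+\infty$. For $f=0$ the modular vanishes for every $\lambda$, so every positive scale is admissible and $\E_s(0)=0$. The case $c=0$ with $\E_s(f)<\infty$ is then the statement $0=0\cdot\E_s(f)$.

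\emph{Minimizing bodies.} Let $f$ be as in Theorem~\ref{thm:optimal}. Then $f_A$ and $cf$ are again nonzero functions in $C_c^1$. By the identities above, $N_{s,AK_s(f)}(f_A)=N_{s,K_s(f)}(f)=\E_s(f)=\E_s(f_A)$. Moreover $AK_s(f)\in\Star$ and $|AK_s(f)|=\omega_n$. So $AK_s(f)$ attains the infimum defining $\E_s(f_A)$, and the uniqueness part of Theorem~\ref{thm:optimal} forces $K_s(f_A)=AK_s(f)$. Likewise $N_{s,K}(cf)=|c|N_{s,K}(f)$ with $|c|>0$, so the two functionals $K\mapsto N_{s,K}(cf)$ and $K\mapsto N_{s,K}(f)$ have the same minimizers, and uniqueness gives $K_s(cf)=K_s(f)$.

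There is no real obstacle here. The only point needing care is bookkeeping of the $+\infty$ convention in the Luxemburg infimum, together with the fact that $\SL$ preserves origin-symmetry, the star-body property and volume, so that the admissible class is mapped onto itself.
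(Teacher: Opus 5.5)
Your proof is correct and follows the same route as the paper: the substitution $x=Az$, $y=Aw$ with $\norm{A(z-w)}_{AK}=\norm{z-w}_K$, the fact that $K\mapsto AK$ maps the admissible class onto itself, the rescaling $\lambda\mapsto\lambda/|c|$ for homogeneity, and uniqueness from Theorem~\ref{thm:optimal} for the covariance of the minimizing bodies. You also write out details the paper leaves implicit, namely the $+\infty$ convention, the zero function, and the case $c=0$.
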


\begin{proof}
 Making the substitutions $x=Az$ and $y=Aw$, we have unit Jacobians and $\norm{A(z-w)}_{AK}=\norm{z-w}_K$. This proves the modular identity in \eqref{eq:covariance}. Since $K\mapsto AK$ maps the admissible class onto itself and preserves volume, taking Luxemburg thresholds and then the infimum gives the energy identity. Translation invariance follows by a translation of both integration variables. Replacing $\lambda$ by $\lambda/|c|$ gives homogeneity in the amplitude. Finally, uniqueness in Theorem~\ref{thm:optimal} implies the stated covariance of the minimizing bodies.
\end{proof}

\section{Existence and uniqueness of optimal star bodies}\label{sec:optimization}

The aim of this section is to prove Theorem~\ref{thm:optimal}. We first consider the modular at a fixed positive scale and express its dependence on the body through the radial density. Strict convexity then permits us to construct the minimizing density from its Euler equation. This gives the minimizing body and its uniqueness without assuming compactness of the class of star bodies.

For $u\in\Sph$ and $\lambda>0$, define 
\begin{equation}\label{eq:H}
H_{u,\lambda}(\alpha)=\alpha\F_s(f;u,\lambda/\alpha^{s/n}),\qquad \alpha>0.
\end{equation}
Writing $\alpha(u)=\rho_K(u)^n$ and applying \eqref{eq:polar}, we have
\begin{equation}\label{eq:density-min}
\M_{s,K}(f;\lambda)=\int_{\Sph}H_{u,\lambda}(\alpha(u))\,du,
\qquad \int_{\Sph}\alpha(u)\,du=n\omega_n.
\end{equation}

\begin{lemma}\label{lem:convex-density}
Suppose $0\ne f\in C_c^1(\R^n)$ and that \eqref{eq:growth} holds. Then $H_{u,\lambda}$ is continuously differentiable and strictly convex, and its derivative is given by
\begin{equation}\label{eq:Hprime}
G_{s,\lambda}(u,\alpha):=H'_{u,\lambda}(\alpha)
=\F_{\Psi_s,s}(f;u,\lambda/\alpha^{s/n}).
\end{equation}
If $\lambda$ ranges over a compact positive interval, the derivative tends to zero as $\alpha\rightarrow0$ and to infinity as $\alpha\rightarrow\infty$, uniformly in the direction $u$.
\end{lemma}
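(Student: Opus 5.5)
The plan is to write $H_{u,\lambda}$ pointwise inside the integral and differentiate under the integral sign. Expanding the definition,
\[
H_{u,\lambda}(\alpha)=\int_0^\infty\int_{\R^n}\alpha\,\Phi\left(\frac{\alpha^{s/n}|f(x+ru)-f(x)|}{\lambda r^s}\right)dx\,\frac{dr}{r}.
\]
For fixed $a=|f(x+ru)-f(x)|/(\lambda r^s)\ge0$, put $h_a(\alpha)=\alpha\Phi(a\alpha^{s/n})$. A direct computation gives
\[
h_a'(\alpha)=\Phi(a\alpha^{s/n})+\frac sn a\alpha^{s/n}\Phi'(a\alpha^{s/n})=\Psi_s(a\alpha^{s/n}).
\]
Integrating this formally yields \eqref{eq:Hprime}.

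To justify the differentiation I would restrict $\alpha$ to a compact interval $[\alpha_1,\alpha_2]\subset(0,\infty)$. On this interval, \eqref{eq:psi-bounds} and \eqref{eq:scaling} give
\[
h_a'(\alpha)\le(1+s\beta_+/n)\,\alpha_2^{s\beta_+/n}\,\Phi(a).
\]
The right-hand side is integrable by Lemma~\ref{lem:directional}. Dominated convergence, applied to difference quotients through the mean value theorem, then proves differentiability and the formula for the derivative. Continuity of $G_{s,\lambda}(u,\cdot)$ follows because Lemma~\ref{lem:directional} states that $\F_{\Psi_s,s}(f;u,\mu)$ is continuous in $\mu$, and $\mu=\lambda/\alpha^{s/n}$ is continuous in $\alpha$.

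Strict convexity comes from strict monotonicity of the derivative. For $\alpha'>\alpha$, the argument $a\alpha^{s/n}$ increases strictly whenever $a>0$. Since $\Psi_s$ is strictly increasing, as noted after Theorem~\ref{thm:optimal}, the integrand $\Psi_s(a\alpha^{s/n})$ increases strictly on the set where $f(x+ru)\neq f(x)$. The positivity argument in Lemma~\ref{lem:directional} shows that this set has positive measure in $(x,r)$. Hence $G_{s,\lambda}(u,\cdot)$ is strictly increasing, and $H_{u,\lambda}$ is strictly convex.

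For the limits I would use scaling rather than the pointwise formula. By \eqref{eq:scaling} applied to $\Psi_s$, which is legitimate because \eqref{eq:psi-bounds} reduces everything to $\Phi$, we get for $\alpha\le1$
\[
G_{s,\lambda}(u,\alpha)\le\frac{1+s\beta_+/n}{1+s\beta_-/n}\,\alpha^{s\beta_-/n}\,\F_{\Psi_s,s}(f;u,\lambda).
\]
For $\alpha\ge1$ we get
\[
G_{s,\lambda}(u,\alpha)\ge c\,\alpha^{s\beta_-/n}\,\F_{\Psi_s,s}(f;u,\lambda),\qquad c>0.
\]
Lemma~\ref{lem:directional} makes $(u,\lambda)\mapsto\F_{\Psi_s,s}(f;u,\lambda)$ continuous and positive on $\Sph\times I$ for a compact $I$. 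It is therefore bounded above and below by positive constants there. Letting $\alpha\to0$ and $\alpha\to\infty$ then gives both limits uniformly in $u$ and in $\lambda\in I$.

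The main obstacle is making this scaling step rigorous: the constants must come out uniform, and $\Psi_s$ itself must obey two-sided power bounds. For $\Psi_s$ I would derive these directly from \eqref{eq:psi-bounds} combined with \eqref{eq:scaling} for $\Phi$, which costs only the harmless constant ratio shown above.
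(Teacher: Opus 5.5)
Your proposal is correct and is essentially the paper's proof: you use the same pointwise derivative $\Psi_s(a\alpha^{s/n})$, domination via \eqref{eq:psi-bounds}, \eqref{eq:scaling} and Lemma~\ref{lem:directional}, strict convexity from the positive-measure set where $a>0$ (phrased through strict increase of $\Psi_s$ rather than $h_a''>0$), and the same power-type bounds for the endpoint limits (comparing with $\F_{\Psi_s,s}$ instead of $\F_s$, which is immaterial). The only slip is that your dominating bound $\alpha_2^{s\beta_+/n}\Phi(a)$ requires $\alpha_2\geq1$; otherwise use $\max\{1,\alpha_2\}^{s\beta_+/n}$. This is harmless, since the compact interval can always be enlarged.
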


\begin{proof}
Define
$a=a(x,r)=|f(x+ru)-f(x)|/(\lambda r^s)$. For $\alpha>0$ the scalar integrand is $h_a(\alpha)=\alpha\Phi(a\alpha^{s/n})$. If $a>0$, the ordinary chain rule gives
\[
h_a'(\alpha)=\Phi(a\alpha^{s/n})+\frac sna\alpha^{s/n}\Phi'(a\alpha^{s/n})=\Psi_s(a\alpha^{s/n}).
\]
For $\alpha$ in a compact positive interval,  by \eqref{eq:psi-bounds}, monotonicity of $\Phi$, and \eqref{eq:scaling}, this derivative is bounded by $C\Phi(C_1a)$, where the constants are independent of $x$ and $r$.  Lemma~\ref{lem:directional}, applied at the effective scale $\lambda/C_1$, shows that this bound is integrable. The mean value theorem bounds each scalar difference quotient by this bound. Dominated convergence therefore yields
\[
H'_{u,\lambda}(\alpha)=\int_0^\infty\int_{\R^n}\Psi_s\left(
\frac{\alpha^{s/n}|f(x+ru)-f(x)|}{\lambda r^s}\right)dx\,\frac{dr}{r}.
\]
This is \eqref{eq:Hprime}. Joint continuity in $(u,\alpha,\lambda)$ follows from Lemma~\ref{lem:directional} and continuity of the effective scale $\lambda/\alpha^{s/n}$. In particular $H_{u,\lambda}$ is continuously differentiable.

We next establish strict convexity. For $a>0$ and $\alpha>0$, pointwise differentiation gives
\[
h_a''(\alpha)=\frac sn a\alpha^{s/n-1}\Big[(1+s/n)\Phi'(a\alpha^{s/n})+\frac sna\alpha^{s/n}\Phi''(a\alpha^{s/n})\Big].
\]
Convexity implies $\Phi''\geq0$. Also \eqref{eq:growth} implies $\Phi'(t)>0$ for $t>0$. Thus $h_a''(\alpha)>0$. For two distinct positive numbers $\alpha_1,\alpha_2$ and $0<\theta<1$, the scalar convexity inequality is strict wherever $a>0$. By positivity of the directional modular, that set has positive product measure. Integrating the strict inequality, whose terms are all integrable by the preceding bounds, proves
\[
H_{u,\lambda}(\theta \alpha_1+(1-\theta)\alpha_2)
<\theta H_{u,\lambda}(\alpha_1)+(1-\theta)H_{u,\lambda}(\alpha_2).
\]
In particular, its continuous derivative is strictly increasing.

It remains to prove that the two endpoint limits are uniform in the direction and on compact intervals of scales. We obtain both limits from the growth estimates. Write
$A_s=1+s\beta_-/n$ and $B_s=1+s\beta_+/n$. For $0<\alpha\leq1$, the comparisons for $\Psi_s$ and the small-factor scaling of $\Phi$ give
\[
G_{s,\lambda}(u,\alpha)\leq B_s \alpha^{s\beta_-/n}\F_s(f;u,\lambda).
\]
For $\alpha\geq1$, the large-factor scaling gives
\[
G_{s,\lambda}(u,\alpha)\geq A_s \alpha^{s\beta_-/n}\F_s(f;u,\lambda).
\]
If $\lambda\in I$, compactness and Lemma~\ref{lem:directional} provide
\[
0<m_I:=\min_{\Sph\times I}\F_s(f;u,\lambda)
\leq\max_{\Sph\times I}\F_s(f;u,\lambda)=:M_I<\infty.
\]
The first endpoint follows from the upper bound $B_sM_I\alpha^{s\beta_-/n}$, and the second from the lower bound $A_sm_I\alpha^{s\beta_-/n}$. These bounds also make the asserted uniformity explicit.
\end{proof}

\begin{proof}[Proof of Theorem~\ref{thm:optimal}]
We shall first construct the minimizing body at a fixed scale $\lambda>0$ and then choose the scale that gives the minimum Luxemburg energy. Lemma~\ref{lem:convex-density} shows that, for each $u\in\Sph$, the map $\alpha\mapsto G_{s,\lambda}(u,\alpha)$ is a continuous strictly increasing bijection from $(0,\infty)$ onto $(0,\infty)$. Hence for each $\eta>0$ there is exactly one solution of
\begin{equation}\label{eq:inverse-euler}
G_{s,\lambda}(u,\alpha_\eta(u))=\eta.
\end{equation}
To obtain a star body from these scalar solutions, we need continuity and positivity of the resulting radial function. We therefore verify the continuity of the inverse equation before imposing the volume constraint. If $\eta$ ranges in a compact positive interval, the uniform endpoint limits in Lemma~\ref{lem:convex-density} confine every root to one compact interval $[c,C]\subset(0,\infty)$. Let $u_j\to u$ and $\eta_j\to\eta>0$. Any subsequence of the roots has a further subsequence converging to some $\alpha\in[c,C]$. Joint continuity gives $G_{s,\lambda}(u,\alpha)=\eta$, and uniqueness of the scalar root gives $\alpha=\alpha_\eta(u)$. Thus every convergent subsequence has the same limit, which proves joint continuity of $(u,\eta)\mapsto \alpha_\eta(u)$. Evenness of the directional data gives $\alpha_\eta(-u)=\alpha_\eta(u)$, and strict increase of the inverse gives strict increase in $\eta$ at every direction.

The endpoint limits of the inverse are uniform in $u$. Indeed, for any $a>0$, the positive continuous function $G_{s,\lambda}(u,a)$ has a positive minimum. A multiplier smaller than this minimum has $\alpha_\eta(u)<a$ for every $u$. This proves $\alpha_\eta\to0$ uniformly as $\eta\rightarrow0$. Likewise, for any $A>0$, a multiplier larger than $\max_uG_{s,\lambda}(u,A)$ has $\alpha_\eta(u)>A$ for every $u$, proving uniform divergence as $\eta\to\infty$.

Define $V(\eta)=\int_{\Sph}\alpha_\eta(u)\,du$. The uniform bounds on compact multiplier intervals permit dominated convergence, so $V$ is continuous. It is strictly increasing because the integrand strictly increases at every direction. The just proved uniform limits show $V(\eta)\to0$ at zero and $V(\eta)\to\infty$ at infinity. Therefore exactly one multiplier $\eta_\lambda$ satisfies $V(\eta_\lambda)=n\omega_n$. The positive continuous even function
\[
\rho_\lambda(u)=\alpha_{\eta_\lambda}(u)^{1/n}
\]
defines an origin-symmetric star body by
$K_\lambda=\{ru:u\in\Sph,\ 0\leq r\leq\rho_\lambda(u)\}$. Polar integration gives $|K_\lambda|=n^{-1}\int\rho_\lambda^n=\omega_n$.

We next show that the body just constructed is the unique minimizer. By differentiability and strict convexity, the density integrand is bounded below by its tangent line, and equality holds only at the point of tangency. Thus for any positive continuous density $w$ satisfying $\int w=n\omega_n$,
\[
H_{u,\lambda}(w(u))\geq H_{u,\lambda}(\alpha_{\eta_\lambda}(u))
+\eta_\lambda\big(w(u)-\alpha_{\eta_\lambda}(u)\big).
\]
Integrating cancels the linear term. By \eqref{eq:density-min}, this proves that $K_\lambda$ minimizes the modular. The pointwise difference after subtracting the tangent line is nonnegative, and it is positive wherever $w\ne \alpha_{\eta_\lambda}$. If its integral is zero, the two densities agree almost everywhere. Two continuous functions on the sphere that agree almost everywhere agree everywhere, since any nonempty open subset of the sphere has positive surface measure. Hence the minimizing body is unique. Moreover, the supporting-line argument also applies to measurable nonnegative densities with the prescribed integral, provided that $H_{u,\lambda}(0)=0$.

Let
\[
m_s(\lambda)=\M_{s,K_\lambda}(f;\lambda).
\]
The constructed density is positive and bounded, so Lemma~\ref{lem:directional} gives $0<m_s(\lambda)<\infty$. For $\lambda_2>\lambda_1$, apply \eqref{eq:scaling} to each admissible body. Since the multiplicative factors are independent of the admissible body, we may take the infimum on both sides and obtain
\begin{equation}\label{eq:m-scaling}
\left(\frac{\lambda_1}{\lambda_2}\right)^{\beta_+}m_s(\lambda_1)
\leq m_s(\lambda_2)\leq
\left(\frac{\lambda_1}{\lambda_2}\right)^{\beta_-}m_s(\lambda_1).
\end{equation}
These estimates imply continuity from both sides at every positive scale, strict decrease, and the limits infinity at zero and zero at infinity. There is consequently one and only one scale $\lambda_s$ satisfying $m_s(\lambda_s)=1$.

For $K_{\lambda_s}$, the level-one characterization in Lemma~\ref{lem:directional} gives $N_{s,K_{\lambda_s}}(f)=\lambda_s$. Conversely, for any body $L$ of the prescribed volume and any $0<\mu<\lambda_s$,
\[
\M_{s,L}(f;\mu)\geq m_s(\mu)>1.
\]
Thus no scale smaller than $\lambda_s$ is admissible for $L$, so $N_{s,L}(f)\geq\lambda_s$. Taking the infimum over $L$ gives $\E_s(f)=\lambda_s$ and proves attainment. If another body $L$ has this seminorm, its modular at $\lambda_s$ is one, again by Lemma~\ref{lem:directional}. Hence it is also a fixed-scale minimizer and must equal $K_{\lambda_s}$.

Finally, substituting the radial density into \eqref{eq:Hprime} gives \eqref{eq:intro-euler}.  Conversely, suppose a positive continuous radial density, a positive scale, and a multiplier satisfy that equation, the volume constraint, and modular normalization. The supporting-line argument makes that density the unique fixed-scale minimizer. Its minimum modular is one, so the scale must be the unique root $\lambda_s$. This proves the final characterization as well as all the existence and uniqueness assertions.
\end{proof}

\begin{remark}\label{rem:scale-dependence}
Notice that $K_{s,\lambda}(f)$ can depend on $\lambda$ when the Young function is not a power. The minimizing body for the Luxemburg energy is obtained from this family at the unique scale $m_s(\lambda)=1$. Thus a directional $\Phi$-level surface prescribed at an arbitrary scale need not characterize the optimal body for the Luxemburg problem.
\end{remark}

\begin{proposition}[Recovery of the power case]\label{prop:power}
Let $\Phi(t)=t^p$, $1<p<\infty$, and $0\ne f\in C_c^1(\R^n)$. Define the directional power integral and the associated fractional polar projection body by
\[
D_{s,p}(u)=\int_0^\infty\int_{\R^n}|f(x+ru)-f(x)|^p\,dx\,\frac{dr}{r^{1+sp}},
\qquad \rho_{\Pi_p^{*,s}f}(u)=D_{s,p}(u)^{-1/(sp)}.
\]
Then the minimizing body and its energy are given by
\begin{equation}\label{eq:power-energy}
K_s(f)=\left(\frac{\omega_n}{|\Pi_p^{*,s}f|}\right)^{1/n}\Pi_p^{*,s}f,
\qquad
\E_s(f)=n^{1/p}\omega_n^{1/p+s/n}|\Pi_p^{*,s}f|^{-s/n}.
\end{equation}
\end{proposition}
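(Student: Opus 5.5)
For $\Phi(t)=t^p$ the directional modular factorizes completely, so I will compute everything in closed form and then read off the minimizer from Theorem~\ref{thm:optimal}.

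First, I compute the directional modular. From \eqref{eq:directional},
\[
\F_s(f;u,\lambda)=\lambda^{-p}D_{s,p}(u).
\]
Inserting this into the polar representation \eqref{eq:polar} gives
\[
\M_{s,K}(f;\lambda)=\lambda^{-p}\int_{\Sph}\rho_K(u)^{n+sp}D_{s,p}(u)\,du .
\]
Lemma~\ref{lem:directional} guarantees that $D_{s,p}$ is finite, positive and continuous, so $\Pi_p^{*,s}f$ is a well-defined origin-symmetric star body. Its radial function satisfies $D_{s,p}=\rho_{\Pi_p^{*,s}f}^{-sp}$, and hence
\[
\M_{s,K}(f;\lambda)=\lambda^{-p}\int_{\Sph}\rho_K^{n+sp}\rho_{\Pi}^{-sp}\,du ,
\]
where $\Pi=\Pi_p^{*,s}f$. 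By Lemma~\ref{lem:directional}, $N_{s,K}(f)$ is the unique level-one scale, so
\[
N_{s,K}(f)=\Big(\int_{\Sph}\rho_K^{n+sp}\rho_\Pi^{-sp}\,du\Big)^{1/p}.
\]

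Next, I identify the minimizer by solving the Euler equation \eqref{eq:intro-euler}. Here $\Psi_s(t)=(1+sp/n)t^p$, so the left side of \eqref{eq:intro-euler} equals
\[
(1+sp/n)\,\lambda_s^{-p}\rho_{K_s}(u)^{sp}\rho_\Pi(u)^{-sp}.
\]
Requiring this to be constant in $u$ forces $\rho_{K_s}=c\,\rho_\Pi$ for some $c>0$, that is, $K_s=c\,\Pi$. The volume constraint then fixes $c=(\omega_n/|\Pi|)^{1/n}$. This is consistent with uniqueness: the positive continuous density $c^n\rho_\Pi^n$ satisfies the Euler equation and the volume constraint, and choosing $\lambda_s$ so that the modular equals one makes it the minimizer by the converse part of Theorem~\ref{thm:optimal}. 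As an independent check, Hölder's inequality with exponents $(n+sp)/n$ and $(n+sp)/(sp)$ applied to
\[
\int\rho_\Pi^n=\int(\rho_K^{n}\rho_\Pi^{-sn\cdot p/(n+sp)\cdot\ldots})\cdots
\]
gives the same conclusion directly. I would quote Theorem~\ref{thm:optimal} rather than redo the Hölder argument.

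Finally, I compute the energy by substituting $\rho_K=c\rho_\Pi$ into the formula for $N_{s,K}$:
\[
N^p=c^{n+sp}\int_{\Sph}\rho_\Pi^n\,du=c^{n+sp}\,n|\Pi| .
\]
With $c^n=\omega_n/|\Pi|$ this becomes
\[
N^p=n\,\omega_n^{1+sp/n}\,|\Pi|^{-sp/n},
\]
and taking $p$-th roots yields \eqref{eq:power-energy}.

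The only delicate point is bookkeeping of the exponents in this last computation. There is no real obstacle, because existence, uniqueness and the Euler characterization are already supplied by Theorem~\ref{thm:optimal}.
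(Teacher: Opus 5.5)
Your proposal is correct and follows essentially the paper's argument. You reduce the directional modular to $\lambda^{-p}D_{s,p}(u)$, solve the Euler equation to get $\rho_{K_s}=c\,\rho_\Pi$, fix $c$ by the volume constraint, and read off the level-one scale; the only cosmetic difference is that the paper solves the fixed-scale density equation (noting the minimizer is independent of $\lambda$) and then solves $m_s(\lambda)=1$, whereas you use \eqref{eq:intro-euler} and the closed form of $N_{s,K}(f)$ directly. The H\"older aside is garbled as written (the split should be $\rho_K^n=(\rho_K^{n+sp}\rho_\Pi^{-sp})^{n/(n+sp)}\rho_\Pi^{nsp/(n+sp)}$, integrated), but you do not rely on it, so it can simply be dropped.
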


\begin{proof}
 By the definition of the directional modular, we have
\[
\F_s(f;u,\lambda/\alpha^{s/n})
=\lambda^{-p}\alpha^{sp/n}D_{s,p}(u),\qquad
H_{u,\lambda}(\alpha)=\lambda^{-p}D_{s,p}(u)\alpha^{1+sp/n}.
\]
Lemma~\ref{lem:directional} shows that $D_{s,p}$ is positive, continuous, and even. Therefore $D_{s,p}^{-1/(sp)}$ is the radial function of an admissible star body, denoted by $\Pi=\Pi_p^{*,s}f$. The Euler equation reads
\[
\left(1+\frac{sp}{n}\right)\lambda^{-p}D_{s,p}(u)\alpha(u)^{sp/n}=\eta.
\]
Solving this scalar equation gives $\alpha(u)=cD_{s,p}(u)^{-n/(sp)}=c\rho_\Pi(u)^n$, where the constant $c>0$ is independent of $u$. The volume formula gives
\[
n\omega_n=\int_{\Sph}\alpha(u)\,du
=c\int_{\Sph}\rho_\Pi(u)^n\,du=cn|\Pi|.
\]
Hence $c=\omega_n/|\Pi|$ and the minimizing radial function is $c^{1/n}\rho_\Pi$. In particular the minimizing body is independent of the scale $\lambda$ in this special case.

Substituting the minimizing density into the modular will give the required constant. Since
\[
D_{s,p}(u)\left(D_{s,p}(u)^{-n/(sp)}\right)^{1+sp/n}
=D_{s,p}(u)^{-n/(sp)},
\]
we obtain
\begin{align*}
m_s(\lambda)
&=\lambda^{-p}c^{1+sp/n}\int_{\Sph}D_{s,p}(u)^{-n/(sp)}\,du\\
&=\lambda^{-p}\left(\frac{\omega_n}{|\Pi|}\right)^{1+sp/n}n|\Pi|
=\lambda^{-p}n\omega_n^{1+sp/n}|\Pi|^{-sp/n}.
\end{align*}
Solving $m_s(\lambda)=1$ and taking the positive $p$th root proves both formulas in \eqref{eq:power-energy}. 
\end{proof}

In particular, Proposition~\ref{prop:power} recovers the optimal fractional Sobolev body of Haddad and Ludwig \cite[Section~10]{HLLp}, with the constant corresponding to our Luxemburg normalization. In the power case their characterization is expressed through dual mixed volumes. For the general Young function, the strictly convex radial-density problem above gives the corresponding characterization.

\section{Anisotropic rearrangement and equality}\label{sec:rearrangement}

In this section, we establish the anisotropic rearrangement inequality and characterize its equality conditions. We begin with a representation of the difference modular by pairs of level sets. In this representation the kernel depends on the difference of the two level parameters. The next lemma gives the representation and identifies the condition under which the Riesz rearrangement inequality applies to each such kernel.

\begin{lemma}[Difference representation]\label{lem:layers}
Suppose that \eqref{eq:growth} holds. For $a,b\geq0$ and $c>0$, we have
\begin{equation}\label{eq:layers}
\begin{split}
\Phi(|a-b|/c)=\frac1{c^2}\int_{0<\zeta<\sigma<\infty}
\Phi''((\sigma-\zeta)/c)\big(&\mathbf1_{\{a>\sigma,\ b\leq \zeta\}}\\
&+\mathbf1_{\{b>\sigma,\ a\leq \zeta\}}\big)\,d\sigma\,d\zeta.
\end{split}
\end{equation}
For $\alpha>0$, let
\begin{equation}\label{eq:layer-kernel}
k_\alpha(r)=\lambda^{-2}r^{-n-2s}\Phi''\left(\frac \alpha{\lambda r^s}\right),\qquad r>0.
\end{equation}
Then \eqref{eq:kernel-condition} holds if and only if $k_\alpha$ is nonincreasing for each $\alpha>0$ and $\lambda>0$. Moreover, for each $M>0$, the truncated kernel $z\mapsto\min\{k_\alpha(\norm z_K),M\}$ is integrable on $\R^n$.
\end{lemma}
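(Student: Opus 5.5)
The lemma has three parts, and I will prove them in order: the layer formula \eqref{eq:layers}, the monotonicity equivalence, and the integrability of the truncated kernel.

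\emph{The layer formula.} By symmetry of both sides in $(a,b)$, I may assume $a>b\geq0$; the case $a=b$ is trivial, since both indicators vanish. Then only the first indicator can be nonzero, and the region of integration is $\{b\leq\zeta<\sigma<a\}$. Substituting $t=\sigma-\zeta$ and using Tonelli (the integrand is nonnegative because $\Phi''\geq0$), the right-hand side becomes
\[
c^{-2}\int_0^{a-b}\Phi''(t/c)\,(a-b-t)\,dt .
\]
This is the Taylor formula with integral remainder for $g(x)=\Phi(x/c)$ at $0$, using $g(0)=g'(0)=0$, so it equals $\Phi((a-b)/c)$.

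The one delicate point is regularity. $\Phi$ is $C^2$ only on $(0,\infty)$, so $\Phi''$ may be nonintegrable near $0$. I will therefore apply the remainder formula on $[\varepsilon,a-b]$ and let $\varepsilon\to0$. The boundary terms are $\Phi(\varepsilon/c)$ and $\varepsilon\,\Phi'(\varepsilon/c)/c$, and both tend to $0$ because $\Phi\in C^1$ with $\Phi(0)=\Phi'(0)=0$. The integral term converges by monotone convergence.

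\emph{The monotonicity equivalence.} Set $t=\alpha/(\lambda r^s)$, which gives a decreasing bijection $r\mapsto t$ of $(0,\infty)$ onto itself. Then $r=(\alpha/(\lambda t))^{1/s}$, so
\[
r^{-n-2s}=(\lambda t/\alpha)^{n/s+2}.
\]
Hence
\[
k_\alpha(r)=\lambda^{-2}(\lambda/\alpha)^{n/s+2}\,t^{n/s+2}\Phi''(t)=C_{\alpha,\lambda}\,\chi_s(t),
\]
with a positive constant $C_{\alpha,\lambda}$. Since $r\mapsto t$ is decreasing, $k_\alpha$ is nonincreasing in $r$ exactly when $\chi_s$ is nondecreasing in $t$. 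Both directions of the equivalence follow at once, and the statement holds for every $\alpha$ and $\lambda$.

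\emph{Integrability of the truncated kernel.} Use $\norm z_K\geq|z|/R_K$, where $R_K=\max\rho_K$. Near the origin the truncation $\min\{\cdot,M\}\leq M$ handles everything on a bounded set. For large $r$, $t=\alpha/(\lambda r^s)\to0$, so I need an upper bound on $\Phi''$ near zero.

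This tail is the main obstacle, because the growth condition \eqref{eq:growth} controls $\Phi$ and $\Phi'$ but not $\Phi''$ pointwise. I would first try the kernel condition itself. Monotonicity of $\chi_s$ gives $\chi_s(t)\leq\chi_s(1)$ for $t\leq1$, that is $\Phi''(t)\leq\chi_s(1)\,t^{-n/s-2}$. This yields only $k_\alpha(r)\lesssim r^{0}$, which is bounded but not integrable, so it is not enough.

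Instead I will work with averages. Integrate in polar coordinates with respect to $\norm z_K$, which gives the density $n|K|r^{n-1}\,dr$, and then change variables to $t$. The tail becomes
\[
\int_0^{t_0}\Phi''(t)\,t^{2+\dots}\,\frac{dt}{t}
\]
with the exponents arranged so that it reduces to $\int_0^{t_0}t\,\Phi''(t)\,dt$ times constants. Explicitly, $r^{-n-2s}\,r^{n-1}\,dr\propto t\,dt$. Integrating by parts, this equals
\[
t_0\Phi'(t_0)-\Phi(t_0)+\lim_{\varepsilon\to0}\bigl(\Phi(\varepsilon)-\varepsilon\Phi'(\varepsilon)\bigr)<\infty .
\]
This Fubini-type argument avoids any pointwise bound on $\Phi''$. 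Combined with the bounded truncation on compact sets, it proves integrability. If the claim is needed for the non-truncated tail with $K$ general, the same computation applies, since $K$ only enters through the constant $n|K|$.
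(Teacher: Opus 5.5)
Your proposal is correct and follows the paper's proof essentially step by step. It uses the same substitution $t=\alpha/(\lambda r^s)$, giving $k_\alpha(r)=\lambda^{n/s}\alpha^{-n/s-2}\chi_s(t)$, and the same gauge-polar tail computation $\int_R^\infty k_\alpha(r)r^{n-1}\,dr=(s\alpha^2)^{-1}\int_0^T t\Phi''(t)\,dt=(T\Phi'(T)-\Phi(T))/(s\alpha^2)$. Your Taylor-remainder derivation of the layer formula is just the paper's iterated integration (first in $\sigma$, then in $\zeta$) carried out in the other order.

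One minor point does not affect the argument. $\Phi''$ is in fact integrable near $0$, since $\int_\varepsilon^T\Phi''=\Phi'(T)-\Phi'(\varepsilon)\to\Phi'(T)$, so your $\varepsilon$-cutoff is unnecessary. Also, its boundary term is $(a-b-\varepsilon)\Phi'(\varepsilon/c)/c$, not $\varepsilon\Phi'(\varepsilon/c)/c$, though it still tends to zero.
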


\begin{proof}
Suppose first that $a>b$. The second indicator in \eqref{eq:layers} is zero, and the first selects the triangular region $b\leq \zeta<\sigma<a$, up to its boundary, which has two-dimensional measure zero. For fixed $\zeta\in(b,a)$, we have
\[
\int_v^a\frac1{c^2}\Phi''((\sigma-\zeta)/c)\,d\sigma
=\frac1c\big[\Phi'((a-\zeta)/c)-\Phi'(0)\big]
=\frac1c\Phi'((a-\zeta)/c).
\]
Integrating over $b<\zeta<a$ and substituting $t=(a-\zeta)/c$ gives
\[
\int_b^a\frac1c\Phi'((a-\zeta)/c)\,d\zeta
=\int_0^{(a-b)/c}\Phi'(t)\,dt
=\Phi((a-b)/c).
\]
 If $b>a$, interchanging $a$ and $b$ reduces to the calculation just made, while if $a=b$ both indicators vanish except on a null set. This proves the representation in every case.

For the kernel calculation, put $t=\alpha/(\lambda r^s)$, so that
$r=(\alpha/(\lambda t))^{1/s}$. Direct substitution yields
\[
k_\alpha(r)=\lambda^{-2}\left(\frac{\lambda t}{\alpha}\right)^{n/s+2}\Phi''(t)
=\lambda^{n/s}\alpha^{-n/s-2}\chi_s(t).
\]
The prefactor is positive and independent of $r$, and $r\mapsto t$ is a decreasing bijection of $(0,\infty)$. Therefore nondecrease of $\chi_s$ is equivalent to nonincrease of $k_\alpha$. Conversely, it would suffice to assume the latter for one fixed positive $\alpha$ and $\lambda$, because the same bijective change of variables recovers the monotonicity of $\chi_s$.

To prove integrability of the truncated kernel, we use polar coordinates with respect to the Minkowski functional.  For any nonnegative measurable $g$,
\begin{equation}\label{eq:detailed-gauge-polar}
\int_{\R^n}g(\norm z_K)\,dz=n|K|\int_0^\infty g(r)r^{n-1}\,dr.
\end{equation}
Indeed, in Euclidean polar coordinates write $z=tu$, use $\norm{tu}_K=t/\rho_K(\beta_-)$, substitute $t=r\rho_K(\beta_-)$, and use $\int\rho_K^n=n|K|$. Tonelli justifies this identity for extended nonnegative integrals.

For the part with $0<r<R$, the truncated integrand is bounded by $M$, so its integral is at most $M|K|R^n$. For the tail, no truncation is needed. With $T=\alpha/(\lambda R^s)$, the same substitution as above gives the exact identity
\[
\int_R^\infty k_\alpha(r)r^{n-1}\,dr
=\frac1{s\alpha^2}\int_0^T t\Phi''(t)\,dt
=\frac{T\Phi'(T)-\Phi(T)}{s\alpha^2}<\infty.
\]
For the integration by parts, first integrate from $\epsilon$ to $T$ and then let $\epsilon\rightarrow0$. The boundary expression $\epsilon\Phi'(\epsilon)-\Phi(\epsilon)$ tends to zero by continuity at zero. By \eqref{eq:detailed-gauge-polar}, the interior bound and the tail bound prove integrability of the truncated kernel.
\end{proof}

\begin{proof}[Proof of the inequality in Theorem~\ref{thm:rearrangement}]
Write $E_t=\{x:f(x)>t\}$. These sets have finite measure for $t>0$, and $E_\sigma\subset E_\zeta$ when $\sigma>\zeta$. Fix such a pair, put $\alpha=\sigma-\zeta$, and set
\[
q_M(z)=\min\{k_\alpha(\norm z_K),M\},\qquad
q_M^\star(z)=\min\{k_\alpha(\norm z_{K^\star}),M\}.
\]
We first verify that $q_M^\star$ is indeed the spherical decreasing rearrangement of the truncated kernel. For any $0<t<M$, monotonicity of $k_\alpha$ implies that $\{r:k_\alpha(r)>t\}$ is an initial interval. The corresponding superlevel set of $q_M$ is a dilate $r_tK$, up to its boundary. The boundary of a star body with continuous radial function has Lebesgue measure zero. Euclidean polar integration reduces its intersection with each ray to a singleton. Thus the choice of endpoint of the initial interval is irrelevant. The corresponding set for the centered body is $r_tK^\star$, and
$|r_tK|=r_t^n|K|=r_t^n|K^\star|$. For $t\geq M$ both sets are empty. Integrability from Lemma~\ref{lem:layers} ensures that each positive superlevel has finite measure. Hence the two kernels are equimeasurable and the second is the spherical decreasing rearrangement of the first.

The Riesz rearrangement inequality, used with the nonnegative functions $\mathbf1_{E_\sigma}$, $\mathbf1_{E_\zeta}$ and $q_M$, now gives
\begin{equation}\label{eq:riesz-truncated}
\int_{E_\sigma}\int_{E_\zeta}q_M(x-y)\,dy\,dx
\leq\int_{E_\sigma^\star}\int_{E_\zeta^\star}q_M^\star(x-y)\,dy\,dx.
\end{equation}
 For its general multiple-integral form, see \cite{BLL}. Every integral in this application is finite, since $q_M\leq M$ and the level sets have finite measure. The total-space integrals are finite as well by Lemma~\ref{lem:layers}. Equimeasurability, or directly \eqref{eq:detailed-gauge-polar}, gives
\[
\int_{\R^n}q_M(z)\,dz=\int_{\R^n}q_M^\star(z)\,dz.
\]
Therefore subtraction in the following identities involves only finite numbers:
\begin{align*}
\int_{E_\sigma}\int_{E_\zeta^c}q_M(x-y)\,dy\,dx
&=|E_\sigma|\int_{\R^n}q_M(z)\,dz
-\int_{E_\sigma}\int_{E_\zeta}q_M(x-y)\,dy\,dx,\\
\int_{E_\sigma^\star}\int_{(E_\zeta^\star)^c}q_M^\star(x-y)\,dy\,dx
&=|E_\sigma^\star|\int_{\R^n}q_M^\star(z)\,dz
-\int_{E_\sigma^\star}\int_{E_\zeta^\star}q_M^\star(x-y)\,dy\,dx.
\end{align*}
The first terms on the right agree, whereas the second terms satisfy \eqref{eq:riesz-truncated}. We conclude that
\begin{equation}\label{eq:complement-ineq}
\int_{E_\sigma}\int_{E_\zeta^c}q_M(x-y)\,dy\,dx
\geq\int_{E_\sigma^\star}\int_{(E_\zeta^\star)^c}q_M^\star(x-y)\,dy\,dx.
\end{equation}
Letting $M$ tend increasingly to infinity and applying the monotone convergence theorem gives the corresponding inequality for the original kernels. 

We next show that integration of the complement terms over the level parameters gives the original modular. Outside a spatial null set, both $f(x)$ and $f(y)$ are finite. Apply \eqref{eq:layers} with
$a=f(x)$, $b=f(y)$, and $c=\lambda\norm{x-y}_K^s$, and then multiply by $\norm{x-y}_K^{-n}$.  Tonelli's theorem exchanges the spatial variables and the nonnegative level variables. The first indicator contributes
\[
\int_{0<\zeta<\sigma<\infty}\int_{E_\sigma}\int_{E_\zeta^c}
k_{\sigma-\zeta}(\norm{x-y}_K)\,dy\,dx\,d\sigma\,d\zeta.
\]
The second contributes the same value after interchanging $x$ and $y$. Here origin symmetry of $K$ is used to ensure $\norm{y-x}_K=\norm{x-y}_K$. Thus
\begin{equation}\label{eq:modular-layers}
\M_{s,K}(f;\lambda)=2\int_{0<\zeta<\sigma<\infty}
\int_{E_\sigma}\int_{E_\zeta^c}k_{\sigma-\zeta}(\norm{x-y}_K)\,dy\,dx\,d\sigma\,d\zeta.
\end{equation}
The superlevel sets of $f^\star$ agree, up to null sets, with $E_t^\star$. Applying the same identity to $f^\star$ and $K^\star$ and integrating the untruncated complement inequality proves \eqref{eq:anisotropic-ps}. Since every exchange of integrals and every limiting step above involves nonnegative functions, the argument also applies to an infinite modular.

If $|K|=\omega_n$, its equal-volume ball is $B^n$. Whenever a scale is admissible for $N_{s,K}(f)$, the modular comparison makes that scale admissible for $[f^\star]_{s,\Phi}$. Thus
$[f^\star]_{s,\Phi}\leq N_{s,K}(f)$. Taking the infimum over $K$ gives the first inequality in \eqref{eq:affine-ps}. Taking $K=B^n$ gives the second. For the rearranged function, $(f^\star)^\star=f^\star$ almost everywhere. Applying both bounds to this function yields
$[f^\star]_{s,\Phi}\leq\E_s(f^\star)\leq[f^\star]_{s,\Phi}$, which proves the claimed identity also in the extended-valued case.
\end{proof}

Now let us characterize the equality conditions. We first study the difference between the two sides of the Riesz inequality for each truncated kernel. Since these integrals are finite, their differences are well defined. The layer representation of those differences will then allow us to pass from equality of the modulars to equality for almost every pair of levels and almost every kernel level.

\begin{lemma}[Rigidity of the layer kernels]\label{lem:rigidity}
Suppose that \eqref{eq:growth} holds, $\Phi''>0$, and $\chi_s$ is strictly increasing. Then the kernels $k_\alpha$ in \eqref{eq:layer-kernel} are continuous and strictly decreasing, tending to infinity at zero and to zero at infinity. If equality holds in \eqref{eq:anisotropic-ps} with finite positive modular, then, for almost every pair $0<\zeta<\sigma<\esssup f$, the sets $E_\sigma,E_\zeta$ and $K$ are homothetic ellipsoids up to null sets. Moreover, $E_\sigma$ and $E_\zeta$ have a common center.
\end{lemma}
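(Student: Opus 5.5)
The kernel properties come first and are read off the identity $k_\alpha(r)=\lambda^{n/s}\alpha^{-n/s-2}\chi_s(\alpha/(\lambda r^s))$ from the proof of Lemma~\ref{lem:layers}. Continuity of $\Phi''$ on $(0,\infty)$ gives continuity of $k_\alpha$. Since $r\mapsto\alpha/(\lambda r^s)$ is a decreasing bijection and $\chi_s$ is strictly increasing, $k_\alpha$ is strictly decreasing. As $r\to\infty$ the argument $t\to0$, and $\chi_s(t)=t^{n/s+2}\Phi''(t)$ is nonnegative and increasing, so $\chi_s(0^+)$ exists. It must equal $0$: otherwise $\Phi''(t)\geq c\,t^{-n/s-2}$ near $0$, which contradicts $\Phi'(0)=0$ with $\Phi'$ finite. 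Hence $k_\alpha\to0$ at infinity. As $r\to0$ we have $t\to\infty$. Here I would use the growth condition: $\Phi'(t)\geq\beta_-\Phi(t)/t\geq ct^{\beta_--1}$ for $t\geq1$, so $\Phi'$ is unbounded. Because $\chi_s$ is increasing, $\Phi''(t)\geq \chi_s(1)t^{-n/s-2}$ with $\chi_s(1)>0$ (as $\Phi''>0$). Then $k_\alpha(r)\geq \lambda^{-2}r^{-n-2s}\chi_s(1)(\lambda r^s/\alpha)^{n/s+2}$, which is the constant $\lambda^{n/s}\alpha^{-n/s-2}\chi_s(1)>0$; this does not blow up. So I would instead argue directly: if $\chi_s$ were bounded by $C$, then $\Phi''(t)\leq Ct^{-n/s-2}$, so $\Phi'$ would be bounded, contradicting $\Phi'(t)\geq ct^{\beta_--1}$ with $\beta_->1$. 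Therefore $\chi_s\to\infty$ and $k_\alpha\to\infty$ at $0$.

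For rigidity, I would go back to the proof of \eqref{eq:anisotropic-ps}. Set
\[
\Delta_M(\sigma,\zeta)=\int_{E_\sigma}\int_{E_\zeta^c}q_M-\int_{E_\sigma^\star}\int_{(E_\zeta^\star)^c}q_M^\star\geq0 .
\]
By \eqref{eq:modular-layers} and monotone convergence, the modular difference equals $2\iint \lim_M\Delta_M\,d\sigma\,d\zeta$, and both modulars are finite. Equality of the modulars therefore forces $\lim_M\Delta_M(\sigma,\zeta)=0$ for a.e.\ pair. Each $\Delta_M$ is nondecreasing in $M$. To see this, write $q_M=\int_0^M\mathbf 1_{\{k_\alpha(\|z\|_K)>t\}}dt$ in layer-cake form. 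Each layer $\mathbf 1_{r_tK}$ contributes a nonnegative Riesz defect $D_t\geq0$, which is the complement form of the three-set Riesz inequality for $\mathbf 1_{E_\sigma},\mathbf 1_{E_\zeta},\mathbf 1_{r_tK}$. Hence $D_t=0$ for a.e.\ $t>0$. By the continuity and strict monotonicity just proved, $r_t$ ranges over all of $(0,\infty)$. So for a.e.\ pair of levels and a.e.\ $r>0$, equality holds in the Riesz inequality for $E_\sigma,E_\zeta,rK$.

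At this point I would invoke Burchard's theorem \cite{Burchard}. Whenever the radii $r_A,r_B,r_C$ of the symmetrals satisfy the strict triangle inequalities (each less than the sum of the other two), equality forces all three sets to be, up to null sets, homothetic ellipsoids $a-\text{centered}$, $b$, $c$ with $a=b+c$. Take the ellipsoid pair $(E_\sigma,rK)$ with $K$ centered at the origin, since $K$ is origin symmetric, so $c=0$. This gives $a=b$: $E_\sigma$ and $E_\zeta$ are ellipsoids homothetic to $K$ with a common center. For the triangle condition: $|E_\sigma|>0$ for $\sigma<\esssup f$, $|E_\zeta|\geq|E_\sigma|$, and the radius of $rK$ can be chosen freely. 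Taking $r$ with $r_{E_\zeta}-r_{E_\sigma}<r_{rK}<r_{E_\zeta}+r_{E_\sigma}$ gives a nonempty interval of $r$, so a.e.\ such $r$ qualifies, which suffices.

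I expect the main obstacle to be this passage from the truncated-kernel equality to the equality for a.e.\ single layer $rK$. That requires the layer-cake decomposition of the complement inequality together with the finiteness bookkeeping. The degenerate case $|E_\sigma|=|E_\zeta|$ also needs care; there the triangle condition still holds for small $r$, and the conclusion $E_\sigma=E_\zeta$ a.e.\ is consistent.
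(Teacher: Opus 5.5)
Your proposal is correct and follows the paper's proof essentially step for step. The kernel limits use the same contradiction arguments for $\chi_s$ at $0$ and at $\infty$. The truncated deficit $\Delta_M$ is decomposed in layer-cake form into nonnegative set-Riesz defects, which vanish for a.e.\ pair of levels and then for a.e.\ kernel level. Burchard's theorem is applied on the strict-triangle interval, and origin symmetry of $K$ forces the two level-set centers to coincide.

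One small correction concerns your claim of zero defect for ``a.e.\ $r>0$''. A null set of kernel levels $t$ need not pull back to a null set of radii, because $k_\alpha$ is only known to be continuous and strictly monotone; for instance, $\chi_s$ could be singular on an interval. Argue as the paper does instead: the admissible open interval of radii is carried by $k_\alpha$ onto a nondegenerate open interval of levels, which has positive measure and therefore contains a level with zero defect.
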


\begin{proof}
 The positivity and monotonicity of $\chi_s(t)=t^{n/s+2}\Phi''(t)$ imply that its limit at zero exists and is finite and nonnegative. If that limit is $\ell>0$, then for sufficiently small $t$ we  have $\Phi''(t)\geq(\ell/2)t^{-n/s-2}$. For fixed small $T>0$ this  gives
\[
\Phi'(T)-\Phi'(\epsilon)=\int_\epsilon^T\Phi''(t)\,dt
\geq\frac\ell2\int_\epsilon^Tt^{-n/s-2}\,dt\longrightarrow\infty,
\]
contrary to $\Phi'(\epsilon)\to0$. Thus $\chi_s(t)\to0$ at zero. If its limit at infinity is finite, then $\Phi''(t)\leq Ct^{-n/s-2}$ for $t\geq1$. This  makes $\Phi'$ bounded and consequently $\Phi(t)\leq C_1+C_2t$ for $t\geq1$. But \eqref{eq:scaling} gives $\Phi(t)\geq\Phi(1)t^{\beta_-}$, and $\beta_->1$ is a contradiction. Therefore $\chi_s(t)\to\infty$ at infinity. The kernel identity from Lemma~\ref{lem:layers}, together with continuity and strict increase of $\chi_s$, shows that $k_\alpha$ is a continuous strictly decreasing bijection of $(0,\infty)$ onto $(0,\infty)$. Denote its inverse by $k_\alpha^{-1}$.

Suppose now that the anisotropic modular comparison is an equality of finite positive numbers. For each pair of levels, let $C_M(\sigma,\zeta)$ and $C_M^\star(\sigma,\zeta)$ denote the two complement integrals in \eqref{eq:complement-ineq}. For the associated direct integrals put
\[
J_K^M=\int_{E_\sigma}\int_{E_\zeta}\min\{k_{\sigma-\zeta}(\norm{x-y}_K),M\}\,dy\,dx,
\]
and define $J_\star^M$ in the same way with rearranged sets and body. Equality of the total kernel masses and equality of the level-set volumes imply
\[
\Delta_M(\sigma,\zeta):=J_\star^M-J_K^M=C_M(\sigma,\zeta)-C_M^\star(\sigma,\zeta)\geq0.
\]
All four terms in this identity are finite.  Define
$J(U,V,W)=\int_U\int_V\mathbf1_W(x-y)\,dy\,dx$. The layer formula for the kernel gives
\begin{equation}\label{eq:deficit-layers}
\Delta_M(\sigma,\zeta)=\int_0^M\big[
J(E_\sigma^\star,E_\zeta^\star,k_\alpha^{-1}(t)K^\star)-J(E_\sigma,E_\zeta,k_\alpha^{-1}(t)K)\big]dt,
\end{equation}
where $\alpha=\sigma-\zeta$. Indeed, $\min\{k_\alpha(r),M\}=\int_0^M\mathbf1_{\{r<k_\alpha^{-1}(t)\}}dt$, and Tonelli applies separately to each finite direct integral. The set form of Riesz makes the integrand in square brackets nonnegative for every $t>0$. Hence $\Delta_M$ is nondecreasing in $M$.

Notice that $\Delta_M$ is also the difference between the two complement integrals in \eqref{eq:complement-ineq}. Since the modulars are finite, \eqref{eq:modular-layers} shows that the untruncated complement integrals are finite for almost every pair of levels. Their difference is equal to $\lim_{M\to\infty}\Delta_M$. If the two modulars are equal, the integral of this nonnegative limit is zero. Therefore it vanishes for almost every pair. For each such pair, it follows from \eqref{eq:deficit-layers} that the set rearrangement deficit is zero for almost every $t>0$.

Let $T=\esssup f\in(0,\infty]$ and choose a pair $0<\zeta<\sigma<T$ outside these exceptional null sets. Then $0<|E_\sigma|\leq|E_\zeta|<\infty$. 
In particular $K^\star=(|K|/\omega_n)^{1/n}B^n$. The strict triangle condition for the three rearranged radii is
\[
\begin{gathered}
\big|(|E_\sigma|/\omega_n)^{1/n}-(|E_\zeta|/\omega_n)^{1/n}\big|
<k_\alpha^{-1}(t)(|K|/\omega_n)^{1/n}<(|E_\sigma|/\omega_n)^{1/n}+(|E_\zeta|/\omega_n)^{1/n}
.
\end{gathered}
\]
The strict triangle inequalities specify a nonempty interval of radii. The map $k_\alpha^{-1}$ is a continuous decreasing bijection, so the corresponding set of kernel levels is also a nonempty open interval. Thus one can choose a level in that interval at which the set rearrangement deficit is zero. By Burchard's equality theorem \cite[Theorem~1]{Burchard}, applied to $E_\sigma,E_\zeta,k_\alpha^{-1}(t)K$, the three sets are homothetic ellipsoids up to a null set. The center of the third set is the center of the first minus the center of the second level set. Since the third set is origin symmetric, the two level-set centers coincide. This proves the lemma.
\end{proof}

\begin{proof}[Completion of the equality statements in Theorem~\ref{thm:rearrangement}]
By Lemma~\ref{lem:rigidity}, any pair outside the exceptional null set identifies $K$ with an ellipsoid up to a null set. Positivity and continuity of the radial function imply that the body is exactly that ellipsoid. Choose $A\in\SL$ such that $K=AK^\star$. If necessary, composing with an orthogonal symmetry of the ball makes the determinant positive without changing the image body.

It remains to show that the centers obtained from different pairs agree. By Fubini's theorem, there exists a level $t_0\in(0,\esssup f)$ for which almost every other level forms a pair satisfying Lemma~\ref{lem:rigidity}. Here both orders of the two levels are used to cover the region on either side of the chosen level. For each such level $t$, the set $E_t$ is a translate of a dilate of the fixed body and has the same center as $E_{t_0}$. Denote that center by $x_0$. The volume of the level set determines the dilation. Consequently, we have
\[
E_t=x_0+A E_t^\star\quad\hbox{up to a null set for almost every }t>0.
\]
Using the layer representation of the function and applying Tonelli's theorem to the symmetric differences of these sets, we obtain $f(x)=f^\star(A^{-1}(x-x_0))$ almost everywhere. Conversely, if the function has this form and $K=AK^\star$, equality follows by the change of variables in Proposition~\ref{prop:invariance}.

For the affine equality statement, let $\lambda=[f^\star]_{s,\Phi}=\E_s(f)$ and take the minimizing body supplied by Theorem~\ref{thm:optimal}. Its modular is equal to one at this scale. By \eqref{eq:anisotropic-ps}, the rearranged Euclidean modular is finite at this scale, and \eqref{eq:scaling} makes it finite at every positive scale. $f^\star$ is nonzero and has finite support measure, while vanishing differences would make
it an almost everywhere constant function and hence zero under the finite-superlevel assumption.
Dominated convergence and the scale estimates give continuity and strict decrease in scale, with
endpoint limits infinity and zero. Therefore its modular at its own Luxemburg value $\lambda$ must equal
one. Both sides of the anisotropic modular comparison for $f$, $K_s$ are now one, so the characterization
already proved applies. Conversely, for a function of the stated affine form, take $K=AB$ and use \eqref{eq:affine-ps}. This completes the proof of the equality assertions.
\end{proof}

\begin{remark}
For real-valued functions, the inequality $||f(x)|-|f(y)||\leq|f(x)-f(y)|$ allows the energy comparison to be applied to the rearrangement of the absolute value. The equality assertions above are stated for nonnegative functions, which are the functions represented by the level sets used in the proof. Notice also that these assertions characterize equality in rearrangement. A characterization of all extremizers for a particular Sobolev embedding is a further question.
\end{remark}

\section{Proof of the affine fractional Orlicz--Sobolev inequality}\label{sec:embedding}

We now prove Theorem~\ref{thm:affine-sobolev}. The geometric argument has provided an affine energy and a rearrangement bound for that energy. We first formulate the transfer of a Euclidean embedding into a rearrangement-invariant normed function space. The target norm is unchanged by rearrangement, while the rearranged Euclidean seminorm is bounded by the optimized energy. We then apply this transfer to the Sobolev conjugate Orlicz target and compare the best constants on the common class $\mathcal D_{s,\Phi}$.

\begin{proposition}[Preservation of best constants]\label{prop:bestconstant}
Suppose that \eqref{eq:growth} and \eqref{eq:kernel-condition} hold, and let $X$ be a rearrangement-invariant normed function space on $\R^n$. If the Euclidean inequality
\begin{equation}\label{eq:euclidean-embedding}
\norm f_X\leq C[f]_{s,\Phi}\qquad(f\in\mathcal D_{s,\Phi}),
\end{equation}
holds, then its affine form is
\begin{equation}\label{eq:affine-embedding}
\norm f_X\leq C\E_s(f)\qquad(f\in\mathcal D_{s,\Phi}).
\end{equation}
Moreover, the best constants in \eqref{eq:euclidean-embedding} and \eqref{eq:affine-embedding} are equal when both inequalities are considered on the same class.
\end{proposition}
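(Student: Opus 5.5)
The argument combines rearrangement invariance of the target norm with the affine Pólya--Szegő bound \eqref{eq:affine-ps} from Theorem~\ref{thm:rearrangement}. Fix $f\in\mathcal D_{s,\Phi}$. If $\E_s(f)=\infty$ there is nothing to prove, so assume it is finite. Since $X$ is rearrangement invariant and $f^\star$ is equimeasurable with $|f|$, we have $\norm f_X=\norm{f^\star}_X$.

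Next we check that $f^\star\in\mathcal D_{s,\Phi}$. Its superlevel sets have the same finite measures as those of $|f|$. Moreover, the remark after Theorem~\ref{thm:rearrangement} (applied via $||f(x)|-|f(y)||\le|f(x)-f(y)|$ and monotonicity of $\Phi$) together with \eqref{eq:affine-ps} applied to $|f|$ gives
\[
[f^\star]_{s,\Phi}\le \E_s(|f|)\le \E_s(f)\le [f]_{s,\Phi}<\infty .
\]
Here the middle inequality $\E_s(|f|)\le\E_s(f)$ follows body by body: the modular of $|f|$ is dominated by that of $f$, so admissible Luxemburg scales for $f$ are admissible for $|f|$. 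Applying \eqref{eq:euclidean-embedding} to $f^\star$ then gives
\[
\norm f_X=\norm{f^\star}_X\le C[f^\star]_{s,\Phi}\le C\E_s(f),
\]
which is \eqref{eq:affine-embedding}.

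For the best constants, let $C_E$ and $C_A$ denote the infima of admissible constants in \eqref{eq:euclidean-embedding} and \eqref{eq:affine-embedding}, equivalently the suprema of $\norm f_X/[f]_{s,\Phi}$ and $\norm f_X/\E_s(f)$ over nonzero $f$. The argument above, applied with $C=C_E$ (the Euclidean inequality holds with its best constant, being a supremum bound), gives $C_A\le C_E$. Conversely, $\E_s(f)\le[f]_{s,\Phi}$ by the second inequality in \eqref{eq:affine-ps} (take $K=B^n$), so any constant valid in \eqref{eq:affine-embedding} is valid in \eqref{eq:euclidean-embedding}; hence $C_E\le C_A$.

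The main point needing care is degenerate cases in these ratios. If $\E_s(f)=0$ for nonzero $f$, then $[f^\star]_{s,\Phi}=0$, which forces $f^\star$ to be an almost-everywhere constant with finite superlevel measures, hence zero, and thus $f=0$. So zero denominators do not arise. The other delicate step is the passage from $f$ to $|f|$, which is handled by the pointwise modular comparison above. Everything else follows directly from the results already established in Section~\ref{sec:rearrangement}.
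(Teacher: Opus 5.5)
Your proof is correct and follows the paper's argument essentially step by step: you pass to the absolute value, use the chain $[f^\star]_{s,\Phi}\le\E_s(|f|)\le\E_s(f)\le[f]_{s,\Phi}$, check that $f^\star\in\mathcal D_{s,\Phi}$, invoke rearrangement invariance of $\norm{\cdot}_X$, and compare the two suprema by applying the Euclidean inequality at its own best constant. The only deviation is how you exclude $\E_s(f)=0$ for $f\ne0$. The paper reads this off from $0<\norm f_X\le C\E_s(f)$. You instead argue that $[f^\star]_{s,\Phi}=0$ forces $f^\star=0$, which is also valid once you note, via the scaling estimate \eqref{eq:scaling}, that a modular bounded by one at every scale must vanish identically.
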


\begin{proof}
For a real-valued function, we use $f^\star$ to denote the spherical decreasing rearrangement of $|f|$. We first pass to the absolute value and then apply the rearrangement theorem. The elementary estimate
$\big||f(x)|-|f(y)|\big|\leq|f(x)-f(y)|$, monotonicity of $\Phi$, and the definitions show, for every admissible body,
\[
\M_{s,K}(|f|;\lambda)\leq\M_{s,K}(f;\lambda),\qquad
N_{s,K}(|f|)\leq N_{s,K}(f).
\]
Combining this comparison with Theorem~\ref{thm:rearrangement} gives
\[
[f^\star]_{s,\Phi}\leq\E_s(|f|)\leq\E_s(f)\leq[f]_{s,\Phi}.
\]
If $f\in\mathcal D_{s,\Phi}$, its rearrangement has the same distribution function and hence the same finite-superlevel property. The displayed estimate also makes its Euclidean seminorm finite. Thus $f^\star$ belongs to the same class on which the assumed Euclidean embedding is valid.

Rearrangement invariance of the norm means precisely that equimeasurable absolute values have equal norms. Hence $\norm f_X=\norm{f^\star}_X$. Apply the assumed Euclidean estimate to $f^\star$ and then the preceding lower bound for the affine energy. This yields
\[
\norm f_X=\norm{f^\star}_X
\leq C[f^\star]_{s,\Phi}\leq C\E_s(f).
\]
In particular the target norm is finite for every member of the comparison class, and for a nonzero member the norm is positive. Thus the finite embedding constant forces $\E_s(f)>0$ on nonzero functions, so the best-constant ratios below are well defined.

Write the two best constants as
\[
C_{\rm Euc}=\sup_{0\ne f\in\mathcal D_{s,\Phi}}
\frac{\norm f_X}{[f]_{s,\Phi}},\qquad
C_{\rm aff}=\sup_{0\ne f\in\mathcal D_{s,\Phi}}
\frac{\norm f_X}{\E_s(f)}.
\]
Since $\E_s(f)\leq[f]_{s,\Phi}$, each ratio for the affine denominator is no smaller than the corresponding Euclidean ratio. Thus $C_{\rm aff}\geq C_{\rm Euc}$. To obtain the opposite comparison, apply the transfer argument to an arbitrary constant greater than $C_{\rm Euc}$ and let that constant decrease to $C_{\rm Euc}$. Equivalently, the definition as a supremum gives the Euclidean inequality with $C_{\rm Euc}$ itself, which transfers directly. Hence $C_{\rm aff}\leq C_{\rm Euc}$. 
\end{proof}

\begin{proof}[Proof of Theorem~\ref{thm:affine-sobolev}]
 Put $\gamma=s/(n-s)>0$. For $0<t\leq1$, \eqref{eq:scaling} gives $\Phi(t)\geq\Phi(1)t^{\beta_+}$ and hence
\[
\left(\frac{t}{\Phi(t)}\right)^\gamma
\leq\Phi(1)^{-\gamma}t^{(1-\beta_+)\gamma}.
\]
The exponent $(1-\beta_+)\gamma$ is greater than $-1$ precisely when $s\beta_+<n$. Thus the integral defining $H$ is finite at its lower endpoint. For $t\geq1$, the upper bound $\Phi(t)\leq\Phi(1)t^{\beta_+}$ gives the reverse inequality
\[
\left(\frac{t}{\Phi(t)}\right)^\gamma
\geq\Phi(1)^{-\gamma}t^{(1-\beta_+)\gamma}.
\]
Since the same exponent is greater than $-1$, integration of this lower bound to infinity gives divergence. The integrand is positive for $t>0$, so $H$ is continuous, strictly increasing, tends to zero at zero, and tends to infinity at infinity. Its inverse in \eqref{eq:sobolev-conjugate} is consequently defined on $[0,\infty)$.

The Euclidean theorem of Alberico, Cianchi, Pick, and Slav\'ikov\'a \cite{ACPSembed} is the external embedding result used here: with this Sobolev conjugate and these integral conditions, the homogeneous fractional Orlicz seminorm controls the Luxemburg norm in $L^{\Phi_{n/s}}$ for functions decaying in the sense of finite positive superlevel sets. Its hypotheses hold for $\mathcal D_{s,\Phi}$. In particular a finite Euclidean constant exists on our comparison class. By the definition of the best Euclidean constant on that class, we therefore have
\[
\norm h_{L^{\Phi_{n/s}}}\leq C_{n,s,\Phi}^{\mathrm{Euc}}[h]_{s,\Phi}
\quad(h\in\mathcal D_{s,\Phi}).
\]
For $f$ in that class, rearrangement and the absolute-value comparison place $f^\star$ in the same class. Therefore
\[
\norm f_{L^{\Phi_{n/s}}}
=\norm{f^\star}_{L^{\Phi_{n/s}}}
\leq C_{n,s,\Phi}^{\mathrm{Euc}}[f^\star]_{s,\Phi}
\leq C_{n,s,\Phi}^{\mathrm{Euc}}\E_s(f).
\]
The bound $\E_s(f)\leq[f]_{s,\Phi}$ supplies the second inequality in \eqref{eq:orlicz-chain}. Equality of the two best constants follows from Proposition~\ref{prop:bestconstant}, with exactly this comparison class and this Luxemburg target normalization.
\end{proof}

In particular, the transfer argument applies also to the Orlicz--Lorentz target obtained in \cite{ACPSembed}. Notice that this conclusion is an equality between the two best constants for the stated normalization and function class. For a general nonpower Young function, it does not determine an explicit numerical value of the Euclidean constant. Existence of an extremizer
likewise requires an additional argument. For sharp results and extremizers in the power setting, see Lieb \cite{Lieb}, Frank and Seiringer \cite{FrankSeiringer}, and Brasco, Mosconi, and Squassina \cite{BrascoMosconiSquassina}. When the growth is nonhomogeneous, the corresponding compactness question must be studied separately.

\begin{example}[A common body for a sum of powers]\label{ex:sumpowers}
Consider the finite positive sum
\[
\Phi(t)=\sum_{j=1}^m a_j t^{p_j},\qquad a_j>0,\quad1<p_1<\cdots<p_m<\infty.
\]
For $t>0$, the logarithmic derivative has the weighted-average expression
\[
\frac{t\Phi'(t)}{\Phi(t)}
=\frac{\sum_j a_jp_jt^{p_j}}{\sum_j a_jt^{p_j}}
=\sum_j\theta_j(t)p_j,\qquad
\theta_j(t)=\frac{a_jt^{p_j}}{\sum_i a_it^{p_i}}>0,\quad\sum_j\theta_j(t)=1.
\]
It therefore lies between $p_1$ and $p_m$, proving the growth bounds in \eqref{eq:growth}. Each power has derivative tending to zero at zero because $p_j>1$, so the regularity assumptions at zero also hold. Directly differentiating twice gives the function
\[
\chi_s(t)=\sum_{j=1}^m a_jp_j(p_j-1)t^{n/s+p_j}
\]
is strictly increasing. Thus the optimization theorem and the equality assertions apply to this class of Young functions. At a fixed scale, the unique minimizing radial density is determined by
\begin{equation}\label{eq:sum-euler}
\sum_{j=1}^m a_j\left(1+\frac{s p_j}{n}\right)
\lambda^{-p_j}D_{s,p_j}(u)\alpha(u)^{sp_j/n}=\eta,
\qquad\int_{\Sph}\alpha(u)\,du=n\omega_n.
\end{equation}
Indeed, finite summation commutes with the directional integral, so
\[
H_{u,\lambda}(\alpha)=\sum_j a_j\lambda^{-p_j}D_{s,p_j}(u)\alpha^{1+sp_j/n}.
\]
Differentiating this finite sum and setting its derivative equal to the volume multiplier gives \eqref{eq:sum-euler}. Each directional power integral is finite and positive by Lemma~\ref{lem:directional} applied to the corresponding power. The derivative of each summand in $\chi_s$ is positive, since $n/s+p_j>0$, so the strict kernel assumption is verified as well. For two distinct powers, the two directional terms in this equation have different coefficients. Therefore, without a special relation between the directional data, the Euler equation is not the level equation obtained by using the original Young function alone. The minimizer is characterized by one common body satisfying the combined directional equation.
\end{example}

\begin{example}[A nonpower growth law]\label{ex:logpower}
Let $\Phi(t)=t^2\log(1+t)$. By direct differentiation, we have
\[
\frac{t\Phi'(t)}{\Phi(t)}=2+\frac{t}{(1+t)\log(1+t)}\in[2,3],
\]
and its next derivatives are
\[
\Phi''(t)=2\log(1+t)+\frac{4t}{1+t}-\frac{t^2}{(1+t)^2},\qquad
\Phi'''(t)=\frac{2t^2+6t+6}{(1+t)^3}>0.
\]
It follows that $\Phi''$ is positive and increasing. Hence $\chi_s$ is strictly increasing for each $s\in(0,1)$, and the optimization and rearrangement theorems, including the equality characterization, apply to this nonpower Young function. If $3<n/s$, the embedding in Theorem~\ref{thm:affine-sobolev} applies as well. Notice that the function has cubic growth at the origin and quadratic growth with a logarithmic correction at infinity. Consequently, the Euler equation for this example must retain the full nonpower function. The formula in Proposition~\ref{prop:power} applies to one power and does not replace the combined directional equation in the present case.

\end{example}

\section{Uniform local limits and convergence of optimal bodies}\label{sec:limit}

The aim of this section is to establish Theorem~\ref{thm:limit}. The limit of the modular for a fixed body is related to the Bourgain--Brezis--Mironescu formulas \cite{BBM,BBMEmbedding} and their anisotropic and Orlicz forms \cite{LudwigNorm,BonderSalort,ACPSlimit}. Our extremal problem also requires control of a family of minimizing bodies that varies with the fractional parameter. We shall obtain this control from a uniform limit for the directional Euler expressions. After establishing positive lower and finite upper bounds for the minimizing densities, we can pass to the limit in the minima and in their Luxemburg scales.

First, by \eqref{eq:scaling} and the definition \eqref{eq:barphi}, we obtain
\begin{equation}\label{eq:barphi-bounds}
\frac1{\beta_+}\Phi(t)\leq\overline\Phi(t)\leq\frac1{\beta_-}\Phi(t),\qquad
\beta_-\leq\frac{t\overline\Phi'(t)}{\overline\Phi(t)}\leq \beta_+.
\end{equation}
Moreover, differentiation gives
\[
\overline\Phi''(t)=\frac{t\Phi'(t)-\Phi(t)}{t^2}>0\qquad(t>0).
\]
It follows that the local radial-density integrand, defined by
\begin{equation}\label{eq:local-H}
H_{0,u,\lambda}(\alpha)=\alpha\int_{\R^n}\overline\Phi\left(\frac{\alpha^{1/n}|\nabla f(x)\cdot u|}{\lambda}\right)dx,
\end{equation}
is strictly convex provided that $\nabla f\cdot u$ is not identically zero. This condition holds in every direction for a nonzero compactly supported function of class $C^1$. Indeed, a zero directional derivative would make the function constant on all lines in that direction, and compact support would then make it vanish.

\begin{proposition}[The local optimizer]\label{prop:local-optimal}
Suppose that \eqref{eq:growth} holds, and let $0\ne f\in C_c^1(\R^n)$. For each $\lambda>0$, the local modular has a unique minimizing body $K_{0,\lambda}(f)\in\Star$ of volume $\omega_n$. Its radial density $\alpha_{0,\lambda}(u)=\rho_{K_{0,\lambda}(f)}(u)^n$ is the unique positive continuous even solution of
\begin{equation}\label{eq:local-euler}
G_{0,\lambda}(u,\alpha):=\int_{\R^n}\left[\overline\Phi(z)+\frac1n\Phi(z)\right]dx=\eta,
\qquad z=\frac{\alpha^{1/n}|\nabla f(x)\cdot u|}{\lambda},
\end{equation}
subject to $\int_{\Sph}\alpha\,du=n\omega_n$. The minimum local modular is continuous and strictly decreasing in $\lambda$. If $\lambda_0$ is its unique level-one scale, the unique minimizing body for the local Luxemburg energy is $K_0(f)=K_{0,\lambda_0}(f)$ and $\E_0(f)=\lambda_0$. 
\end{proposition}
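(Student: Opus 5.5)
The plan is to repeat the proof of Theorem~\ref{thm:optimal} verbatim, with the directional modular $\F_s(f;u,\lambda/\alpha^{s/n})$ replaced by the local directional integral. Polar integration of \eqref{eq:local-modular} with $\alpha(u)=\rho_K(u)^n$ gives
\[
\M_{0,K}(f;\lambda)=\int_{\Sph}H_{0,u,\lambda}(\alpha(u))\,du,\qquad \int_{\Sph}\alpha\,du=n\omega_n,
\]
with $H_{0,u,\lambda}$ as in \eqref{eq:local-H}. So the problem is again a separable strictly convex minimization in the density under a linear constraint.

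\textbf{Step 1: the local analogue of Lemma~\ref{lem:convex-density}.} Write $g_u(x)=|\nabla f(x)\cdot u|$. Then $g_u$ is continuous in $(x,u)$, bounded by $\norm{\nabla f}_\infty$, supported in $\supp f$, and not identically zero for any $u$, as noted before the proposition. Differentiating $h_b(\alpha)=\alpha\overline\Phi(b\alpha^{1/n})$ gives
\[
h_b'(\alpha)=\overline\Phi(z)+\tfrac1n z\overline\Phi'(z)=\overline\Phi(z)+\tfrac1n\Phi(z),\qquad z=b\alpha^{1/n},
\]
since $z\overline\Phi'(z)=\Phi(z)$. Dominated convergence on the compact set $\supp f$ therefore yields $H_{0,u,\lambda}'=G_{0,\lambda}$, jointly continuous in $(u,\alpha,\lambda)$. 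Strict convexity of $H_{0,u,\lambda}$ was already recorded before the proposition; explicitly, $h_b''>0$ on the set where $b>0$, which has positive measure. The bounds \eqref{eq:barphi-bounds} and \eqref{eq:scaling} give two-sided power estimates: for $\alpha\le1$,
\[
G_{0,\lambda}(u,\alpha)\le C\alpha^{\beta_-/n}\int\Phi(g_u/\lambda)\,dx,
\]
and for $\alpha\ge1$, $G_{0,\lambda}(u,\alpha)\ge c\,\alpha^{\beta_-/n}\int\Phi(g_u/\lambda)\,dx$. The integral $\int\Phi(g_u/\lambda)\,dx$ is positive and continuous on $\Sph\times I$, so it has a positive minimum and a finite maximum there. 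Hence $G_{0,\lambda}(u,\cdot)$ is an increasing bijection of $(0,\infty)$, and its endpoint limits are uniform in $u$ and in $\lambda$ on compact intervals $I$. Evenness in $u$ is immediate.

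\textbf{Step 2: existence and uniqueness at a fixed scale.} Here I copy the proof of Theorem~\ref{thm:optimal}.
\begin{itemize}
\item Solve $G_{0,\lambda}(u,\alpha_\eta(u))=\eta$.
\item Obtain joint continuity of $(u,\eta)\mapsto\alpha_\eta(u)$ by the subsequence and uniqueness argument.
\item Show that $V(\eta)=\int\alpha_\eta$ is a continuous strictly increasing bijection onto $(0,\infty)$, and choose $\eta_\lambda$ with $V(\eta_\lambda)=n\omega_n$.
\item Apply the supporting-line inequality $H(w)\ge H(\alpha)+\eta(w-\alpha)$ to get minimality and uniqueness among continuous positive densities.
\end{itemize}
The converse, that any positive continuous even solution of \eqref{eq:local-euler} with the volume constraint is the minimizer, also follows from the supporting-line argument. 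This gives the stated characterization.

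\textbf{Step 3: the Luxemburg scale.} Set $m_0(\lambda)=\M_{0,K_{0,\lambda}}(f;\lambda)$, which lies in $(0,\infty)$. Since $\overline\Phi$ obeys the same growth bounds $\beta_\pm$, applying \eqref{eq:scaling} to $\overline\Phi$ body by body and taking infima gives the analogue of \eqref{eq:m-scaling}. This yields continuity, strict decrease, and the limits $\infty$ at $0$ and $0$ at $\infty$, so there is a unique $\lambda_0$ with $m_0(\lambda_0)=1$.

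It remains to identify $\E_0$ with $\lambda_0$. The local analogue of the level-one part of Lemma~\ref{lem:directional} holds by the same scaling argument: for each $K$, $\lambda\mapsto\M_{0,K}(f;\lambda)$ is finite, continuous and strictly decreasing, so $N_{0,K}(f)$ is its level-one scale. Then $\M_{0,L}(f;\mu)\ge m_0(\mu)>1$ for $\mu<\lambda_0$, which gives $N_{0,L}\ge\lambda_0$ with equality for $K_{0,\lambda_0}$. Uniqueness of the Luxemburg minimizer follows because any body attaining $\lambda_0$ has modular one at $\lambda_0$ and is therefore a fixed-scale minimizer.

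I expect no genuine obstacle. The only points needing care are the uniform endpoint limits in Step 1, which require the uniform positive lower bound on $\int\Phi(g_u/\lambda)\,dx$ over $u$, and the positivity of $g_u$ on a set of positive measure for every direction.
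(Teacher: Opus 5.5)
Your proposal is correct and follows essentially the same route as the paper. Both arguments use the identity $t\overline\Phi'(t)=\Phi(t)$ to obtain the Euler expression, rerun the inversion, volume-normalization and supporting-line argument from the proof of Theorem~\ref{thm:optimal}, and apply the $\overline\Phi$-version of \eqref{eq:m-scaling} to the Luxemburg scale; you simply supply the details the paper leaves implicit, namely the explicit power bounds behind the uniform endpoint limits and the level-one identification of $N_{0,K}(f)$.
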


\begin{proof}
	
Since $t\overline\Phi'(t)=\Phi(t)$, differentiating \eqref{eq:local-H} gives \eqref{eq:local-euler}. The gradient is bounded and compactly supported, so differentiation under this integral is justified. By \eqref{eq:barphi-bounds}, \eqref{eq:scaling}, and nonvanishing of each directional derivative, the resulting expression is continuous in $(u,\alpha,\lambda)$ and strictly increasing in $\alpha$, with uniform endpoint limits zero and infinity. Thus the inversion and volume-normalization argument in the proof of Theorem~\ref{thm:optimal} provides a minimizing density. Strict convexity gives its uniqueness. Finally, the scale comparison \eqref{eq:m-scaling} also holds for $\overline\Phi$, which gives the stated properties of the minimum modular and its Luxemburg threshold.	
\end{proof}

We next establish the directional limit uniformly when the radial and scale parameters range over compact positive intervals. We include the auxiliary function in the assertion, since its limit will be used in the Euler equation. In this way the limit of the Euler expression is proved directly, without differentiating a limit formula for the modular.

\begin{lemma}[Uniform directional limit]\label{lem:uniform-bbm}
Suppose that \eqref{eq:growth} holds and $f\in C_c^2(\R^n)$. When $\rho$ and $\lambda$ range over compact subintervals of $(0,\infty)$, the following limits hold uniformly in $(u,\rho,\lambda)$:
\begin{align}
(1-s)\F_s(f;u,\lambda/\rho^s)&\longrightarrow
\int_{\R^n}\overline\Phi\left(\frac{\rho|\nabla f\cdot u|}{\lambda}\right)dx,
\label{eq:uniform-bbm}\\
(1-s)\F_{\Psi_s,s}(f;u,\lambda/\rho^s)&\longrightarrow
\int_{\R^n}\left[\overline\Phi\left(\frac{\rho|\nabla f\cdot u|}{\lambda}\right)
+\frac1n\Phi\left(\frac{\rho|\nabla f\cdot u|}{\lambda}\right)\right]dx.
\label{eq:uniform-euler-bbm}
\end{align}
\end{lemma}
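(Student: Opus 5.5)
Write $\mu=\lambda/\rho^s$, and let $Q$ denote either $\Phi$ or $\Psi_s$. By \eqref{eq:directional}, the quantity to be studied is
\[
(1-s)\int_0^\infty\int_{\R^n}Q\left(\frac{\rho^s|f(x+ru)-f(x)|}{\lambda r^s}\right)dx\,\frac{dr}{r}.
\]
I would split the radial integral at a small $\delta\in(0,1]$ and treat the two pieces separately.

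\emph{The tail $r\geq\delta$.} The large-$r$ estimate from the proof of Lemma~\ref{lem:directional} bounds the integrand by $2|\supp f|\,Q(D\rho^s/(\lambda\delta^s))\,r^{-s\beta_-}$. For $s$ near $1$, the constants in \eqref{eq:psi-bounds} are uniform. Hence the tail integral is bounded uniformly in $(u,\rho,\lambda)$, and after multiplication by $(1-s)$ it tends to zero.

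\emph{The core $0<r<\delta$.} Here I would use the Taylor expansion
\[
f(x+ru)-f(x)=r\nabla f(x)\cdot u+E(x,r,u),\qquad |E|\leq\tfrac12\norm{D^2f}_\infty r^2.
\]
The argument of $Q$ is then
\[
\frac{\rho^s r^{1-s}}{\lambda}\bigl(|\nabla f(x)\cdot u|+O(r)\bigr).
\]
To handle the error term I would use the elementary estimate
\[
|Q(a+b)-Q(a)|\leq |b|\,Q'(|a|+|b|)\leq \frac{\beta_+|b|\,Q(|a|+|b|)}{|a|+|b|},
\]
which is available for $Q=\Phi$ by \eqref{eq:growth}, and for $\Psi_s$ by a similar bound. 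Since $Q$ is convex or monotone, one can also simply use a bound with factor $1\pm\epsilon$. For $r<\delta$ the perturbation is relative, because $O(r)\leq C\delta$. Concretely, $|f(x+ru)-f(x)|$ lies between $r(|\nabla f\cdot u|-C\delta)_+$ and $r(|\nabla f\cdot u|+C\delta)$. The integrand then sits between the corresponding expressions with the exact linear term. Moreover, $x$ ranges in a fixed compact neighbourhood of $\supp f$.

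\emph{The main term.} For a fixed $g=|\nabla f(x)\cdot u|$, substitute $t=\rho^s r^{1-s}g/\lambda$, so that $dt/t=(1-s)\,dr/r$. Then
\[
(1-s)\int_0^\delta Q\left(\frac{\rho^sr^{1-s}g}{\lambda}\right)\frac{dr}{r}=\int_0^{\rho^s\delta^{1-s}g/\lambda}Q(t)\frac{dt}{t}.
\]
As $s\to1$, the upper limit tends to $\rho g/\lambda$, uniformly on compact parameter sets because $\delta^{1-s}\to1$. For $Q=\Phi$ this gives $\overline\Phi(\rho g/\lambda)$. For $Q=\Psi_s$ one has $\Psi_s(t)/t=\Phi(t)/t+(s/n)\Phi'(t)$, so the integral equals $\overline\Phi(T)+(s/n)\Phi(T)$, which tends to $\overline\Phi+\frac1n\Phi$. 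This produces exactly \eqref{eq:uniform-euler-bbm}. The dependence on $g$ is uniformly continuous, since $\overline\Phi$ and $\Phi$ are continuous and all arguments stay bounded. Integrating in $x$ over the compact set gives uniformity in $u$.

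\emph{Conclusion and main obstacle.} The sandwich with $g\pm C\delta$ yields upper and lower limits that differ from the target by a modulus $\omega(\delta)$, uniformly in all parameters, because the argument $g\pm C\delta$ is uniformly continuous. Letting $s\to1$ first and then $\delta\to0$ gives the claim. The main obstacle is this uniform control of the Taylor error inside a nonhomogeneous $Q$. I would handle it with the monotone sandwich above rather than a multiplicative bound, since $|\nabla f\cdot u|$ may vanish on parts of the support. The growth condition \eqref{eq:scaling} keeps every bound uniform in $s$ near $1$.
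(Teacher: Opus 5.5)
Your proposal is correct and follows essentially the same route as the paper. You split the radial integral at $\delta$. On the small-scale part you sandwich the integrand monotonically, over a fixed compact $x$-set, between the arguments $(|\nabla f\cdot u|-C\delta)_+$ and $|\nabla f\cdot u|+C\delta$, and evaluate exactly via the substitution that produces the integrated function ($\overline\Phi$, resp.\ $\overline\Phi+\frac sn\Phi$). The range $r\geq\delta$ is killed by the factor $1-s$, and you let $s\to1$ before $\delta\to0$.

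One small caveat concerns your aside that an estimate $|Q(a+b)-Q(a)|\leq C|b|\,Q(|a|+|b|)/(|a|+|b|)$ holds for $\Psi_s$ ``by a similar bound''. This is not justified by \eqref{eq:growth} alone: it would require control of $t\Phi''(t)$, and $\Psi_s$ need not be convex. You rightly do not rely on it, since the monotone sandwich only uses that $\Psi_s$ is increasing.
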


\begin{proof}
Put $\varepsilon=1-s$, fix $s_0>0$, and consider $s\geq s_0$. We shall treat the two limits together by taking $Q_s=\Phi$ and $Q_s=\Psi_s$, respectively. In both cases the functions are continuous and increasing, are uniformly comparable to the original Young function, and are uniformly bounded on compact argument intervals. Define their integrated functions by
\[
\overline Q_s(t)=\int_0^t\frac{Q_s(\tau)}{\tau}\,d\tau
\]
which are $\overline\Phi(t)$ and $\overline\Phi(t)+(s/n)\Phi(t)$, respectively. Therefore they converge uniformly on compact argument intervals to $\overline Q_1$. At the origin, the bound $Q_s(t)\leq C\Phi(t)\leq C' t^{\beta_-}$ for bounded small $t$ provides uniform integrability of the defining integrals.

Fix $0<\delta<1$. Boundedness of the Hessian gives
\[
f(x+ru)-f(x)=r\nabla f(x)\cdot u+
\int_0^r\big(\nabla f(x+tu)-\nabla f(x)\big)\cdot u\,dt,
\]
and the remainder has absolute value at most $\norm{D^2f}_\infty r^2/2$. The inequality $\big||a|-|b|\big|\leq|a-b|$ therefore implies
\begin{equation}\label{eq:taylor-uniform}
	\left|\frac{|f(x+ru)-f(x)|}{r}-|\nabla f(x)\cdot u|\right|
	\leq C_fr,\qquad C_f=\tfrac12\norm{D^2f}_\infty.
\end{equation}
For the small-scale region, all nonzero terms have $x$ in one compact set $D$ that can be chosen independently of the parameters. Put $a=|\nabla f(x)\cdot u|$. By the preceding estimate and monotonicity of the integrands, the small-scale integral lies between the expressions obtained by replacing this argument with $(a-C_f\delta)_+$ and with $a+C_f\delta$. For either of these replacements, denoted by $b\geq0$, make the substitution $z=r^\varepsilon$. We obtain the identity
\begin{equation}\label{eq:small-exact}
	\varepsilon\int_0^\delta Q_s\left(\frac{\rho^s r^\varepsilon b}{\lambda}\right)\frac{dr}{r}
	=\overline Q_s\left(\frac{\rho^s\delta^\varepsilon b}{\lambda}\right).
\end{equation}
For fixed $\delta$, the right-hand side tends uniformly to $\overline Q_1(\rho b/\lambda)$. Since the limiting integrated function is uniformly continuous on the bounded interval of arguments under consideration, the two bounds tend uniformly to $\overline Q_1(\rho a/\lambda)$ as $\delta\rightarrow0$. Integrating over the compact set $D$ preserves these estimates. Hence the small-scale part has the required uniform limit. This argument uses monotonicity and uniform continuity, and does not require a bound for the second derivative at zero or a Lipschitz estimate for the auxiliary function.

It remains to consider $r\geq\delta$. The union of the translated supports has uniformly bounded measure, and the arguments of the integrands are no larger than $C r^{-s}$. On the interval $[\delta,1]$, the integral is bounded by a constant depending on the chosen cutoff times $|\log\delta|$. On $[1,\infty)$, the power-growth bound gives a constant multiple of $\int_1^\infty r^{-s_0\beta_-}\,dr/r$. After multiplication by $\varepsilon$, both tail estimates tend uniformly to zero. Combining these estimates with \eqref{eq:small-exact}, and then taking $\delta\rightarrow0$, proves the two assertions.
\end{proof}

For the normalized fractional extremal problem, denote the density integrand and its derivative by
\[
\widetilde H_{s,u,\lambda}(\alpha)=(1-s)H_{u,\lambda}(\alpha),\quad
\widetilde G_{s,\lambda}(u,\alpha)=(1-s)G_{s,\lambda}(u,\alpha).
\]
The radial density of its fixed-scale minimizer is determined by
\begin{equation}\label{eq:normalized-euler}
\widetilde G_{s,\lambda}(u,\alpha_{s,\lambda}(u))=\eta_{s,\lambda},\qquad
\int_{\Sph}\alpha_{s,\lambda}\,du=n\omega_n.
\end{equation}
Notice that the positive factor $1-s$ does not alter the minimizing body when the scale is fixed. It does alter the Euler multiplier and the value of the scale at which the minimum modular equals one. The latter scale is the one relevant to the normalized Luxemburg problem.

\begin{lemma}[Bounds for the optimizing densities]\label{lem:radial-bounds}
Let $I\subset(0,\infty)$ be a fixed compact interval of scales. There exist constants $0<c_I<C_I<\infty$ and $s_I<1$ such that, for every $s\in(s_I,1)$ and $\lambda\in I$,
\begin{equation}\label{eq:density-bounds}
	c_I\leq \alpha_{s,\lambda}(u)\leq C_I\qquad(u\in\Sph),
	\qquad c_I\leq\eta_{s,\lambda}\leq C_I.
\end{equation}
Moreover, denoting the local fixed-scale minimizing density by $\alpha_{0,\lambda}$, we have
\begin{equation}\label{eq:uniform-fixed-scale}
	\sup_{\lambda\in I}\norm{\alpha_{s,\lambda}-\alpha_{0,\lambda}}_{C(\Sph)}\longrightarrow0.
\end{equation}
\end{lemma}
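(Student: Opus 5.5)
We will compare the normalized fractional Euler expression with the local one, using Lemma~\ref{lem:uniform-bbm} with $\rho=\alpha^{1/n}$. Since $\widetilde G_{s,\lambda}(u,\alpha)=(1-s)\F_{\Psi_s,s}(f;u,\lambda/\alpha^{s/n})$, and $\alpha^{s/n}=(\alpha^{1/n})^s$, the limit \eqref{eq:uniform-euler-bbm} gives
\[
\widetilde G_{s,\lambda}(u,\alpha)\longrightarrow G_{0,\lambda}(u,\alpha)
\]
uniformly for $u\in\Sph$, $\lambda\in I$ and $\alpha$ in any compact subinterval $[a,b]\subset(0,\infty)$. We then use the growth bounds to obtain quantitative monotonicity in $\alpha$ that holds uniformly in $s$. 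Applying the comparison \eqref{eq:scaling} to $\Psi_s$ (which satisfies the same bounds with $\beta_\pm$, up to constants, by \eqref{eq:psi-bounds}), we get for $\alpha\geq1$
\[
\widetilde G_{s,\lambda}(u,\alpha)\geq c\,\alpha^{s\beta_-/n}\widetilde G_{s,\lambda}(u,1),
\]
and for $\alpha\le1$
\[
\widetilde G_{s,\lambda}(u,\alpha)\le C\alpha^{s\beta_-/n}\widetilde G_{s,\lambda}(u,1),
\]
with constants independent of $s\ge 1/2$. By the uniform limit at $\alpha=1$, together with positivity and continuity of $G_{0,\lambda}(u,1)$ on $\Sph\times I$, the quantity $\widetilde G_{s,\lambda}(u,1)$ lies in a fixed interval $[m,M]\subset(0,\infty)$ for $s>s_I$.

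\emph{Bounds on the multiplier.} Since $\int\alpha_{s,\lambda}=n\omega_n$, there exist directions $u_1,u_2$ with $\alpha_{s,\lambda}(u_1)\le\omega_n\le\alpha_{s,\lambda}(u_2)$. Because $\eta_{s,\lambda}=\widetilde G_{s,\lambda}(u_i,\alpha_{s,\lambda}(u_i))$ and $\widetilde G$ is increasing in $\alpha$, we get
\[
\widetilde G_{s,\lambda}(u_2,\omega_n)\le\eta_{s,\lambda}\le\widetilde G_{s,\lambda}(u_1,\omega_n).
\]
Both sides are bounded above and below by the uniform convergence at the fixed value $\alpha=\omega_n$. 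This gives the multiplier bounds in \eqref{eq:density-bounds}.

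\emph{Bounds on the density.} Suppose $\alpha_{s,\lambda}(u)>A$ with $A\ge1$ large. Then $\eta_{s,\lambda}\ge c\,m A^{s\beta_-/n}$, which exceeds $C_I$ once $A$ is large, uniformly in $s\ge1/2$. Likewise, if $\alpha_{s,\lambda}(u)<a\le1$ with $a$ small, then $\eta_{s,\lambda}\le CMa^{s\beta_-/n}<c_I$. Hence $\alpha_{s,\lambda}$ takes values in a fixed compact interval $[a,A]$, which completes \eqref{eq:density-bounds} after enlarging the constants.

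\emph{Uniform convergence.} This is the main step; we argue by contradiction. Suppose there are $s_j\to1$, $\lambda_j\in I$ and $u_j\in\Sph$ with
\[
|\alpha_{s_j,\lambda_j}(u_j)-\alpha_{0,\lambda_j}(u_j)|\ge\epsilon.
\]
Pass to a subsequence so that $\lambda_j\to\lambda$ and $\eta_{s_j,\lambda_j}\to\eta$. Equicontinuity of the densities is not available directly, so instead we rule out the two signs of the deviation separately. Let $\eta_0(\lambda)$ denote the local multiplier. Uniform convergence of $\widetilde G$ on $\Sph\times I\times[a,A]$, combined with the positive uniform strict monotonicity of $G_{0,\lambda}$ in $\alpha$ on $[a,A]$ (it is continuous and strictly increasing on a compact set), gives the following. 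For any point $(u,\lambda)$,
\[
|\alpha_{s,\lambda}(u)-\alpha_0^{\eta_{s,\lambda}}(u)|\to0
\]
uniformly, where $\alpha_0^\eta$ is the root of $G_{0,\lambda}(u,\cdot)=\eta$. That root is jointly continuous and strictly increasing in $\eta$, by the argument in the proof of Theorem~\ref{thm:optimal} applied to $G_{0,\lambda}$ through Proposition~\ref{prop:local-optimal}. Integrating over $\Sph$ and using the volume constraint, the total volume $V_0(\lambda,\eta)$ is strictly increasing and continuous in $\eta$, with $V_0(\lambda,\eta_0(\lambda))=n\omega_n$. Hence
\[
V_0(\lambda_j,\eta_{s_j,\lambda_j})\to n\omega_n
\]
forces $\eta_{s_j,\lambda_j}-\eta_0(\lambda_j)\to0$, using compactness in $(\lambda,\eta)$ together with uniqueness. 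Continuity of $\alpha_0^\eta$ in $\eta$, uniform on compacts, then yields \eqref{eq:uniform-fixed-scale}.
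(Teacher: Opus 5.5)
Your proposal is correct and follows essentially the paper's argument. You trap the multiplier using the uniform limit of $\widetilde G_{s,\lambda}(u,\cdot)$ at a fixed reference density together with the volume constraint, and you trap the densities with the growth comparisons. You then obtain \eqref{eq:uniform-fixed-scale} by a compactness--uniqueness argument; the only variation is that you compare with the local root at the same scale $\lambda_j$ and show the multipliers converge, whereas the paper passes to the limit scale and uses continuity of $\lambda\mapsto\alpha_{0,\lambda}$, and both rest on the same joint continuity of the local root map.

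One slip needs fixing. Since $\int_{\Sph}du=n\omega_n$, the constraint $\int_{\Sph}\alpha_{s,\lambda}\,du=n\omega_n$ says the density has mean $1$. So you should take directions with $\alpha_{s,\lambda}(u_1)\le1\le\alpha_{s,\lambda}(u_2)$ and use the uniform limit at $\alpha=1$; the version with $\omega_n$ in place of $1$ is false in general, but with this change your argument goes through unchanged.
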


\begin{proof}
The function $G_{0,\lambda}(u,1)$ is positive and continuous on $\Sph\times I$. By compactness and Lemma~\ref{lem:uniform-bbm}, there are $0<a<b<\infty$ for which
\[
a\leq\widetilde G_{s,\lambda}(u,1)\leq b
\]
holds uniformly when $s$ is sufficiently close to one. The volume constraint says that the mean of $\alpha_{s,\lambda}$ on the sphere is one. Thus its continuous radial density takes a value no larger than one and a value no smaller than one. Since the Euler expression in \eqref{eq:normalized-euler} is strictly increasing in the density, it follows that $a\leq\eta_{s,\lambda}\leq b$.

The constants in \eqref{eq:psi-bounds} can be chosen uniformly for fractional parameters close to one. Using this comparison and \eqref{eq:scaling}, we obtain constants $c,C>0$ such that
\begin{align*}
	\widetilde G_{s,\lambda}(u,\alpha)&\leq C \alpha^{s\beta_-/n}\widetilde G_{s,\lambda}(u,1)&& (0<\alpha\leq1),\\
	\widetilde G_{s,\lambda}(u,\alpha)&\geq c \alpha^{s\beta_-/n}\widetilde G_{s,\lambda}(u,1)&& (\alpha\geq1).
\end{align*}
Since $s$ has a positive lower bound, these inequalities and $a\leq\eta\leq b$ give both a positive lower bound and a finite upper bound for each minimizing density. This proves \eqref{eq:density-bounds}.

We next prove convergence of the minimizing densities. Let $s_j\rightarrow1^-$ and $\lambda_j\in I$. Passing to a subsequence, we may assume that $\lambda_j\to\lambda\in I$ and $\eta_{s_j,\lambda_j}\to\eta>0$. On the compact parameter set furnished by \eqref{eq:density-bounds}, Lemma~\ref{lem:uniform-bbm} gives uniform convergence of $\widetilde G_{s_j,\lambda_j}$ to $G_{0,\lambda}$. Let $w$ be the unique positive continuous solution of $G_{0,\lambda}(u,w(u))=\eta$. We claim that $\alpha_{s_j,\lambda_j}\to w$ uniformly. Indeed, for a fixed positive separation of two density values in the common compact interval, strict increase of the limiting Euler expression gives a positive separation of their images, uniformly on the compact sphere. Uniform convergence of the equations and of their multipliers therefore prevents their roots from retaining such a separation. This proves the claim.

Taking the limit in the volume constraint, we have $\int w\,du=n\omega_n$. By Proposition~\ref{prop:local-optimal}, the limiting density is $w=\alpha_{0,\lambda}$. Applying the same inverse-equation argument to a varying local scale shows continuity of $\lambda\mapsto \alpha_{0,\lambda}$ in the uniform norm. If \eqref{eq:uniform-fixed-scale} does not hold, a sequence of scales and fractional parameters violating it would have the convergent subsequence just considered. The resulting uniform convergence to the local density gives a contradiction. Hence convergence is uniform over the specified compact scale interval.
\end{proof}

\begin{proof}[Proof of Theorem~\ref{thm:limit}]
	By Proposition~\ref{prop:local-optimal}, the local problem has the unique minimizing body $K_0(f)$. Denote the minimum normalized fractional modular and the minimum local modular by
	\[
	\widetilde m_s(\lambda)=\min_{|K|=\omega_n}(1-s)\M_{s,K}(f;\lambda),\qquad
	m_0(\lambda)=\min_{|K|=\omega_n}\M_{0,K}(f;\lambda).
	\]
	Lemmas~\ref{lem:uniform-bbm} and \ref{lem:radial-bounds} show that the normalized integrands converge uniformly on a common compact parameter set containing all minimizing densities for a compact interval of scales. Evaluating the integral representations at these densities, we obtain
	\begin{equation}\label{eq:min-modular-convergence}
		\widetilde m_s(\lambda)\longrightarrow m_0(\lambda)
		\quad\hbox{uniformly for }\lambda\hbox{ in compact positive intervals}.
	\end{equation}
	To justify this, use the uniform convergence of the integrands on the common compact set and the uniform convergence of the densities in \eqref{eq:uniform-fixed-scale}. The integration is over a sphere of finite surface measure. Thus the convergence of the minima follows from the constructed minimizers and does not require an unproved exchange of a limit with an infimum over the entire star-body class.
	
	Both minimum modulars satisfy the scale estimates \eqref{eq:m-scaling}. Since $0<m_0(1)<\infty$, convergence at $\lambda=1$ and these estimates place the unique roots $\widetilde\lambda_s$ of $\widetilde m_s(\lambda)=1$ in one compact positive interval for fractional parameters close to one. Any convergent subsequence of the roots has a limit $\lambda$ satisfying $m_0(\lambda)=1$, by \eqref{eq:min-modular-convergence}. The local root is unique. Hence $\widetilde\lambda_s\to\lambda_0=\E_0(f)$, which proves the first assertion of the theorem.
	
	Finally, applying \eqref{eq:uniform-fixed-scale} at the varying scales $\widetilde\lambda_s$ and using continuity of $\alpha_{0,\lambda}$, we have
	\[
	\norm{\alpha_{s,\widetilde\lambda_s}-\alpha_{0,\lambda_0}}_{C(\Sph)}\longrightarrow0.
	\]
	The densities have a common positive lower bound by Lemma~\ref{lem:radial-bounds}. Therefore taking $n$th roots preserves uniform convergence and gives \eqref{eq:optimal-limit} for the radial functions. This completes the proof.
\end{proof}

In particular, for $\Phi(t)=t^p$, we have $\overline\Phi(t)=t^p/p$, and the normalized seminorm is $(1-s)^{1/p}N_{s,K}(f)$. The minimizing local density is the normalized multiple of
\[
\left(\int_{\R^n}|\nabla f(x)\cdot u|^p\,dx\right)^{-n/p}.
\]
Thus the power case agrees with the limit of optimal fractional bodies established in \cite{HLLp}. For $\Phi(t)=t^p+t^q$, the integrated function in the local modular is $t^p/p+t^q/q$, and these distinct coefficients enter the local Euler equation. Notice that the limit is the star-body moment integral \eqref{eq:local-modular}. The local affine Orlicz energy considered in \cite{LinXi} is a different construction, and no identification of these two functionals is made here.

\section{Ellipsoids and optimal positions in the affine orbit}\label{sec:ellipsoids}

Finally, we shall compare the star-body extremal problem with minimization over volume-preserving linear transformations. For the latter problem, define
\begin{equation}\label{eq:orbit-energy}
	\mathcal O_s(f)=\inf_{A\in\SL}[f\circ A^{-1}]_{s,\Phi}.
\end{equation}
By the change of variables in Proposition~\ref{prop:invariance}, we have
\begin{equation}\label{eq:ellipsoid-orbit}
	[f\circ A^{-1}]_{s,\Phi}=N_{s,A^{-1}B}(f).
\end{equation}
As $A$ ranges over the unimodular group, the bodies $A^{-1}B$ range over all centered ellipsoids of volume $\omega_n$. Thus
\begin{equation}\label{eq:three-levels}
	\E_s(f)\leq\mathcal O_s(f)\leq[f]_{s,\Phi}.
\end{equation}
The first inequality results from minimizing over all admissible star bodies instead of only ellipsoids. The second results from choosing the unit ball. These comparisons do not imply that the optimal star body is an ellipsoid. A characterization of minimizers in the affine orbit therefore concerns a more restricted extremal problem.

The following proposition gives a necessary condition for an optimal position in the affine orbit. 

\begin{proposition}[A necessary tensor condition]\label{prop:orbit-tensor}
	Suppose that \eqref{eq:growth} holds and $0\ne g\in C_c^1(\R^n)$. If the identity is an optimal affine position, that is, if
	\[
	[g]_{s,\Phi}=\mathcal O_s(g),\qquad \lambda=[g]_{s,\Phi},
	\]
	then the finite symmetric matrix defined by
	\begin{equation}\label{eq:nonlocal-tensor}
		\mathsf C_{s,\Phi}(g)=\iint_{\R^n\times\R^n}
		\Psi_s\left(\frac{|g(x)-g(y)|}{\lambda|x-y|^s}\right)
		\frac{(x-y)\otimes(x-y)}{|x-y|^{n+2}}\,dx\,dy
	\end{equation}
	is isotropic; more precisely,
	\begin{equation}\label{eq:tensor-isotropy}
		\mathsf C_{s,\Phi}(g)=\frac{\tr\mathsf C_{s,\Phi}(g)}n I.
	\end{equation}
	Here $a\otimes a$ denotes the ordinary outer product, whose entries are $a_i a_j$.
\end{proposition}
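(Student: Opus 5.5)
The plan is to differentiate the modular along the one-parameter family $A_\epsilon=\exp(\epsilon S)$, where $S$ is a traceless symmetric matrix, and to read off a first-order condition at $\epsilon=0$.

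\textbf{Setting up the variation.} By \eqref{eq:ellipsoid-orbit}, optimality of the identity means that $\lambda=[g]_{s,\Phi}$ is minimal among $N_{s,E}(g)$ over centered ellipsoids $E$ of volume $\omega_n$. Put $E_\epsilon=A_\epsilon^{-1}B$. For every $\mu<\lambda$ we have $\M_{s,E_\epsilon}(g;\mu)>1$, since otherwise $N_{s,E_\epsilon}(g)\le\mu<\lambda$. Letting $\mu\to\lambda$ and using the continuity of $\M$ in the scale from Lemma~\ref{lem:directional} gives
\[
\M_{s,E_\epsilon}(g;\lambda)\ge 1=\M_{s,B}(g;\lambda).
\]
So the map $\epsilon\mapsto \M_{s,E_\epsilon}(g;\lambda)$ has a minimum at $\epsilon=0$, and its derivative there vanishes provided it is differentiable.

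\textbf{Computing the derivative via the polar formula.} I will use \eqref{eq:polar} together with the density function $H_{u,\lambda}$ from \eqref{eq:H}:
\[
\M_{s,E_\epsilon}(g;\lambda)=\int_{\Sph}H_{u,\lambda}\big(\rho_{E_\epsilon}(u)^n\big)\,du .
\]
Here $\rho_{E_\epsilon}(u)=|A_\epsilon u|^{-1}$. Hence $\alpha_\epsilon(u)=|A_\epsilon u|^{-n}$, and
\[
\frac{d}{d\epsilon}\Big|_{0}\alpha_\epsilon(u)=-n\,u\cdot Su .
\]
By Lemma~\ref{lem:convex-density}, $H_{u,\lambda}$ is $C^1$ with derivative $\F_{\Psi_s,s}(g;u,\lambda)$ at $\alpha=1$, and this derivative is jointly continuous. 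Since $\alpha_\epsilon$ is smooth in $(\epsilon,u)$ and stays in a compact positive interval, differentiation under the integral over the compact sphere is justified by the mean value theorem. The stationarity condition becomes
\[
\int_{\Sph}\F_{\Psi_s,s}(g;u,\lambda)\,(u\cdot Su)\,du=0\qquad\text{for all traceless symmetric } S .
\]

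\textbf{Identifying the tensor.} Write $y=x+ru$, so that $dy=r^{n-1}dr\,du$. The integrand of \eqref{eq:nonlocal-tensor} has $(x-y)\otimes(x-y)/|x-y|^{n+2}=u\otimes u\, r^{-n}$. Using evenness of $\Psi_s$ in the sign of the difference, this gives
\[
\mathsf C_{s,\Phi}(g)=\int_{\Sph}\F_{\Psi_s,s}(g;u,\lambda)\,u\otimes u\,du .
\]
Finiteness follows from Lemma~\ref{lem:directional}, since $\F_{\Psi_s,s}$ is continuous on the sphere. The condition above then reads $\tr(\mathsf C S)=0$ for every traceless symmetric $S$. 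Since $\mathsf C$ is symmetric, it is orthogonal to all traceless symmetric matrices, so it is a multiple of $I$; taking traces gives \eqref{eq:tensor-isotropy}.

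\textbf{Main obstacle.} The delicate step is the first one: converting optimality of the Luxemburg seminorm into minimality of the modular at the fixed scale $\lambda$. The argument above through $\mu<\lambda$ and continuity in the scale settles it. Differentiability under the integral is routine given the uniform bounds in Lemma~\ref{lem:convex-density}.
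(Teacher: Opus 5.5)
Your proof is correct. Its variational skeleton matches the paper's: perturb along $e^{\epsilon S}$ with $S$ symmetric and trace-free, note that the fixed-scale modular $\epsilon\mapsto\M_{s,E_\epsilon}(g;\lambda)$ is minimized at $\epsilon=0$ with value one, set the first variation to zero, and use orthogonality to trace-free symmetric matrices.

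The difference is in how you compute and justify the first variation. The paper changes variables to write $\M_{s,B}(g_t;\lambda)$ as a double integral with weight $|A_t(x-y)|^{-n}$. It then differentiates the integrand pointwise using $\frac{d}{dt}\log|A_th|=h^THh/|h|^2$, which produces $\Psi_s$ directly, and justifies the interchange by a separate domination $C\Phi(C|g(x)-g(y)|/(\lambda|h|^s))|h|^{-n}$. You instead pass through the polar density representation \eqref{eq:density-min}. The only derivative you then need is that of the one-variable $C^1$ function $H_{u,\lambda}$ from Lemma~\ref{lem:convex-density}, composed with the smooth density $|A_\epsilon u|^{-n}$ on the compact sphere. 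The domination is already contained in that lemma, and you recover the tensor as $\int_{\Sph}\F_{\Psi_s,s}(g;u,\lambda)\,u\otimes u\,du$ by polar coordinates.

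Your route gives a transparent link to Theorem~\ref{thm:optimal}. The ellipsoid variations are exactly the density variations $\dot\alpha=-n\,u\cdot Su$. So the tensor condition says that the star-body Euler expression $G_{s,\lambda}(\cdot,1)$ is orthogonal only to the degree-two harmonics $u\cdot Su$ with $\tr S=0$. Optimality of the ball among all star bodies at that scale would instead force $G_{s,\lambda}(\cdot,1)$ to be constant, as in \eqref{eq:intro-euler}. This makes precise the comparison the paper states informally after the proposition. The paper's direct computation, in turn, is self-contained and does not rely on the radial-density machinery of Section~\ref{sec:optimization}.
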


\begin{proof}
	Let $H$ be a symmetric trace-free matrix, and put $A_t=e^{tH}$. For $g_t=g\circ A_t^{-1}$, making the associated substitutions in the Euclidean modular gives
	\[
	\M_{s,B}(g_t;\lambda)=\iint
	\Phi\left(\frac{|g(x)-g(y)|}{\lambda|A_t(x-y)|^s}\right)
	\frac{dx\,dy}{|A_t(x-y)|^n}.
	\]
	For $h\ne0$, we have $\left.\frac{d}{dt}\log|A_t h|\right|_{t=0}=h^THh/|h|^2$. Differentiating the weight and the argument of the Young function, respectively, we obtain
	\begin{equation}\label{eq:orbit-variation}
		\left.\frac{d}{dt}\M_{s,B}(g_t;\lambda)\right|_{t=0}
		=-n\tr(H\mathsf C_{s,\Phi}(g)).
	\end{equation}
	For parameters $t$ near zero, the lengths $|A_t h|$ and $|h|$ are uniformly comparable. By \eqref{eq:growth}, the absolute value of the derivative is bounded by a constant multiple of $\Phi(C|g(x)-g(y)|/(\lambda|h|^s))|h|^{-n}$. This function is integrable by \eqref{eq:scaling} and Lemma~\ref{lem:directional}. Therefore differentiation under the integral is justified. The same bounds prove finiteness of the matrix in \eqref{eq:nonlocal-tensor}.
	
	Since the identity is minimizing, each nearby transformed function has Luxemburg threshold at least $\lambda$. Its modular at this fixed scale is consequently at least one, while the modular at zero is equal to one. Hence the derivative in \eqref{eq:orbit-variation} vanishes for every symmetric trace-free perturbation. The tensor is orthogonal to every such perturbation, and therefore is a scalar multiple of the identity. Taking its trace gives \eqref{eq:tensor-isotropy}.
\end{proof}

Notice the difference between this condition and \eqref{eq:intro-euler}. The equation for the optimal star body makes its directional Euler expression constant in every direction. The matrix condition \eqref{eq:tensor-isotropy} prescribes only second moments of the directional data. Thus the tensor condition cannot replace the full radial equation in the star-body problem.

For the quadratic power, the affine-orbit problem also has a Fourier formulation in terms of positive-definite matrices. The next proposition gives the existence, uniqueness, and isotropy characterization in that formulation. We include the proof for comparison with the preceding necessary condition. Optimal unimodular transformations in the fractional power setting are also treated in the more general work \cite{Bullion}.

\begin{proposition}[Quadratic power benchmark]\label{prop:quadratic}
	Let $0<s<1$ and $0\ne f\in C_c^\infty(\R^n)$. If $\Phi(t)=t^2$, then the ellipsoid extremal problem in \eqref{eq:orbit-energy} has a unique positive-definite matrix representative. With the unitary convention for the Fourier transform, the problem is equivalent to minimizing
	\begin{equation}\label{eq:spectral-energy}
		J_s(M)=\int_{\R^n}(\xi^TM\xi)^s|\widehat f(\xi)|^2\,d\xi
	\end{equation}
	over symmetric positive-definite matrices $M$ with $\det M=1$. Moreover, the identity is a minimizer if and only if
	\begin{equation}\label{eq:spectral-isotropy}
		\int_{\R^n}|\xi|^{2s-2}\xi\otimes\xi\,|\widehat f(\xi)|^2\,d\xi
		=\frac1n\left(\int_{\R^n}|\xi|^{2s}|\widehat f(\xi)|^2\,d\xi\right)I.
	\end{equation}
\end{proposition}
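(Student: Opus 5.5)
The plan is to pass to Fourier variables and then use concavity of $M\mapsto(\xi^TM\xi)^s$ together with a log-determinant constraint.

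\emph{Step 1: Fourier formulation.} For $\Phi(t)=t^2$ the modular is homogeneous: $\M_{s,B}(g;\lambda)=\lambda^{-2}[g]_{s,2}^2$, where the bracket denotes the Gagliardo double integral. Hence $[g]_{s,\Phi}$ is its square root, and minimizing the Luxemburg seminorm over the orbit is the same as minimizing the Gagliardo energy. The classical identity
\[
\iint\frac{|g(x)-g(y)|^2}{|x-y|^{n+2s}}\,dx\,dy=c_{n,s}\int_{\R^n}|\xi|^{2s}|\widehat g(\xi)|^2\,d\xi
\]
with $c_{n,s}>0$ is proved by Plancherel in $x$ for fixed $h=x-y$, followed by rotation invariance of $\int|1-e^{i\xi\cdot h}|^2|h|^{-n-2s}dh=c|\xi|^{2s}$. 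Apply it to $g=f\circ A^{-1}$ with $\det A=1$. Then $\widehat g(\xi)=\widehat f(A^T\xi)$, and substituting $\eta=A^T\xi$ (unit Jacobian) gives $\int|A^{-T}\eta|^{2s}|\widehat f(\eta)|^2d\eta=J_s(M)$ with $M=A^{-1}A^{-T}$. As $A$ ranges over $\SL$, $M$ ranges over all positive-definite matrices of determinant one, and $M$ depends only on the coset $AO(n)$. The orthogonal group does not change the ellipsoid $A^{-1}B$, so $M$ is the natural unique representative. The integral is finite because $f\in C_c^\infty$ makes $\widehat f$ Schwartz.

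\emph{Step 2: existence and uniqueness.} Parametrize by $M=e^{X}$ with $X$ symmetric and $\tr X=0$; this is a homeomorphism onto the constraint set. For existence, show $J_s$ is coercive. If the largest eigenvalue $\mu_{\max}\to\infty$ with corresponding unit eigenvector $v$, then $J_s(M)\ge\mu_{\max}^s\int|\xi\cdot v|^{2s}|\widehat f|^2$. The last integral has a positive lower bound uniformly in $v$ by compactness, since $\widehat f\ne0$ is continuous and cannot vanish on a neighborhood-free set. Boundedness of $J_s$ therefore confines $M$ to a compact set, because $\det M=1$ bounds the eigenvalues below as well. For uniqueness, the map $M\mapsto(\xi^TM\xi)^s$ is concave and linear pieces are not convex, so instead I would use geodesic convexity along $t\mapsto M_0^{1/2}e^{tY}M_0^{1/2}$. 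Along such a curve $\phi(t)=\xi^TM_0^{1/2}e^{tY}M_0^{1/2}\xi=\sum_i w_ie^{t\mu_i}$ is log-convex, so $\phi^s$ is log-convex and hence convex. Integrating gives convexity of $J_s$, which is strict unless $Y$ acts as a scalar on almost every relevant direction. Because $|\widehat f|^2$ has support with nonempty interior and $\tr Y=0$, this forces $Y=0$. Geodesic convexity on this Hadamard space then gives uniqueness, and it also shows that the critical points are exactly the minimizers.

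\emph{Step 3: isotropy characterization.} Differentiate $J_s(e^{tH})$ at $t=0$ for trace-free symmetric $H$. The derivative is $s\int|\xi|^{2s-2}\xi^TH\xi|\widehat f|^2\,d\xi=s\,\tr(HT)$, where $T$ is the matrix on the left side of \eqref{eq:spectral-isotropy}. Differentiation under the integral is justified since $|\xi|^{2s}$ is locally integrable and $\widehat f$ decays rapidly. The identity is a critical point iff $T$ is orthogonal to all trace-free $H$, i.e. $T=(\tr T/n)I$, and $\tr T=\int|\xi|^{2s}|\widehat f|^2$. By the geodesic convexity of Step 2, critical is equivalent to minimizing.

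The main obstacle I expect is the strictness in Step 2, namely making sure degenerate directions cannot occur. I would handle it through the interior of $\{\widehat f\ne0\}$, which is nonempty by analyticity of $\widehat f$.
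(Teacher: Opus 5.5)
Your proposal is correct and follows essentially the same route as the paper. Both arguments use Plancherel with $M=A^{-1}A^{-T}$, coercivity through the largest eigenvalue, and strict convexity of $J_s$ along the geodesic $M_0^{1/2}(M_0^{-1/2}M_1M_0^{-1/2})^tM_0^{1/2}$ via log-convexity of $\sum_i w_ie^{t\mu_i}$. They then use the first variation along $e^{tH}$, together with that convexity, for the converse. Your strictness argument, via the finite union of eigenspaces of $Y$ together with $\tr Y=0$, is a harmless variant of the paper's observation that the second derivative of $q^s$ vanishes only when $M_0^{1/2}\xi\in\ker Y$.

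One small slip: $M=A^{-1}A^{-T}$ and the ellipsoid $A^{-1}B$ are unchanged under $A\mapsto QA$ with $Q\in O(n)$. So the relevant coset is $O(n)A$, not $AO(n)$.
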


\begin{proof}
	By Plancherel's theorem and integration of $|e^{ir\xi\cdot u}-1|^2$, the fractional quadratic seminorm is a positive constant multiple of its usual Fourier expression. Making a linear change of variables, we obtain the functional \eqref{eq:spectral-energy} with $M=A^{-1}A^{-T}$, according to the convention in \eqref{eq:orbit-energy}. Every positive-definite matrix of determinant one has such a representation.
	
	Put $\mu(\xi)=|\widehat f(\xi)|^2$. The directional integral
	\[
	u\longmapsto\int_{\R^n}|\xi\cdot u|^{2s}\mu(\xi)\,d\xi
	\]
	is continuous on the unit sphere and has a positive value in every direction. Indeed, $\mu$ is nonzero and absolutely continuous with respect to Lebesgue measure, and hence it cannot be supported on a hyperplane. By compactness, the directional integral has a positive minimum. If $\Lambda$ is the largest eigenvalue of $M$ and $u$ is a corresponding unit eigenvector, then we have
	\[
	J_s(M)\geq\Lambda^s\int|\xi\cdot u|^{2s}\mu(\xi)\,d\xi.
	\]
	Consequently, the largest eigenvalue is bounded on every bounded sublevel of the functional. Since the determinant is one, the smallest eigenvalue is then bounded away from zero. These estimates give compactness of the sublevels in the positive-definite determinant-one class, and continuity of the functional gives existence of a minimizer.
	
	To prove uniqueness, join two admissible matrices by the determinant-one path
	\[
	M_t=M_0^{1/2}\big(M_0^{-1/2}M_1M_0^{-1/2}\big)^tM_0^{1/2},\qquad0\leq t\leq1.
	\]
	In an eigenbasis of the middle positive-definite matrix, the quadratic form along this path has the expression $\xi^TM_t\xi=\sum_i b_i(\xi)e^{t a_i}$, where $b_i\geq0$. The logarithm of this expression is convex. Therefore $(\xi^TM_t\xi)^s$ is convex in the path parameter. If $M_0\ne M_1$, at least one $a_i$ is nonzero. To verify strictness, the second derivative of $e^{s\log q(t)}$ is $s e^{s\log q(t)}((\log q)''+s((\log q)')^2)$. Both nonnegative terms can vanish simultaneously only when all exponents corresponding to nonzero coefficients are zero. Thus the second derivative is positive outside a proper linear subspace. The measure with density $\mu$ gives positive mass outside every such subspace, so integration gives strict convexity of $J_s(M_t)$. The minimizing matrix is therefore unique.
	
	For a symmetric trace-free matrix $H$, differentiate along $M_t=e^{tH}$ at $I$. We obtain
	\[
	J_s'(0)=s\tr\left(H\int|\xi|^{2s-2}\xi\otimes\xi\,\mu(\xi)\,d\xi\right).
	\]
	The absolute integrand is bounded by a constant multiple of $|\xi|^{2s}\mu$, which is integrable. Hence the differentiation is valid, and vanishing of the first variation is equivalent to \eqref{eq:spectral-isotropy}. Conversely, if this condition holds, the initial derivative on the path from the identity to any admissible matrix is zero. Convexity along that path gives $J_s(M)\geq J_s(I)$. Its strict form proves uniqueness. This completes the proof.
\end{proof}

\textbf{Acknowledgment.} The authors are grateful to ChatGPT 6 for valuable assistance with the exploration of some of the proof  strategies and calculations. ChatGPT 6 was also used to proofread the manuscript and improve its presentation. The central ideas and approach of this work originate from the authors.

\end{document}